\newtheorem{theorem}{Theorem}
\newtheorem*{theorem*}{Theorem}
\newtheorem*{proposition*}{Proposition}
\newtheorem{proposition}{Proposition}
\newtheorem{lemma}{Lemma}
\newtheorem*{lemma*}{Lemma}
\newtheorem{definition}{Definition}
\newtheorem{remark}{Remark}
\newtheorem{example}{Example}
\def\hpic #1 #2 {\mbox{$\begin{array}[c]{l} \epsfig{file=#1,height=#2}
\end{array}$}}
\def\vpic #1 #2 {\mbox{$\begin{array}[c]{l} \epsfig{file=#1,width=#2}
\end{array}$}}
\newcommand  {\rmn}\romannumeral
 \newcommand{\CL}[0]{\mathcal{L}}
\begin{document}
\title{Positive oriented Thompson  links}
\author{Valeriano Aiello} 
\address{Valeriano Aiello,
Mathematisches Institut, Universit\"at Bern,  Alpeneggstrasse 22, 3012 Bern, Switzerland
}\email{valerianoaiello@gmail.com}
\author{Sebastian Baader} 
\address{Sebastian Baader,
Mathematisches Institut, Universit\"at Bern, Sidlerstrasse 5, 3012 Bern, Switzerland
}\email{sebastian.baader@unibe.ch}

\begin{abstract} 
We prove that the links associated with positive elements of the oriented subgroup of the Thompson group are positive.
\end{abstract}

\maketitle

\centerline{\it In memory of Vaughan F. R. Jones}\bigskip


\section{Introduction}

The Thompson group $F$ (along with its brothers $T$ and $V$) was introduced by R. Thompson in the sixties and has received a great deal of attention.
Indeed, several equivalent definitions appeared in the literature, for instance as a subgroup of piecewise linear homeomorphisms of $[0, 1]$, as a diagram group, as pairs of planar rooted binary trees, and as strand diagrams.
Motivated by the study of subfactors, Vaughan Jones started a new fascinating research program centred on the unitary representations of the Thompson groups.
In particular, Jones' recent work on the representation theory of Thompson's group~$F$ gave rise to a combinatorial model for links, where elements of~$F$ define links in a similar way as elements of the braid groups~\cite{Jo14}.
Unfortunately, the links arising from $F$ do not admit a natural orientation.
For this reason, Jones introduced the so-called oriented subgroup $\vec F$.
The links associated with the elements of $\vec{F}$ come with a natural orientation. 
The Thompson groups are as good knot constructors as the braid groups. In fact,   
all unoriented and oriented links can be produced by means of elements of $F$ and $\vec{F}$, respectively \cite{Jo14, A}.
The Thompson group $F$ contains a natural positive monoid $F_+$ generated by countably many generators $x_1$, $x_2$, $x_3$, \ldots
and its elements are called \emph{positive}. 
The oriented subgroup contains the submonoid $\vec{F}_+$ consisting of its positive elements. 
The purpose of this paper is to show that the notions of positivity for elements of~$\vec{F}$ and for links are compatible, in the following sense.

\begin{theorem}
For any $g\in \vec{F}_+$, the oriented link $\vec{\CL}(g)$ admits a positive  diagram.
\end{theorem}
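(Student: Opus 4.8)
The plan is to construct, for each $g\in\vec{F}_+$, an explicit oriented diagram of $\vec{\CL}(g)$ and to verify that all of its crossings are positive. I would start from the description of $\vec{\CL}(g)$ in \cite{Jo14,A}: represent $g$ by a pair of planar binary trees $(T_+,T_-)$ with the same number of leaves, form the planar Thompson graph by placing $T_+$ above the mirror image of $T_-$ and identifying their leaves along a common horizontal line, and read off the link diagram by the medial construction applied to this graph (one crossing per edge). The hypothesis $g\in\vec{F}$ is precisely what guarantees that the complementary regions admit a consistent checkerboard $2$-colouring, and this colouring is what equips $\vec{\CL}(g)$ with its natural orientation. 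The sign of each medial crossing is thus determined by purely local data---the colours of the four surrounding regions together with the induced orientation of the two strands---so the theorem reduces to a sign computation that I must control uniformly over all positive $g$.

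The second step is to exploit the special shape of positive elements. Fixing a generating set of the monoid $\vec{F}_+$ whose generators are represented by elementary tree pairs, I would build an arbitrary positive $g$ by multiplying these generators one at a time; each multiplication grafts a small fixed tree onto the diagram while leaving the rest essentially unchanged. The medial construction is compatible with this building process: each grafting modifies the Thompson graph locally and inserts a standard tangle whose crossings are, by direct inspection, all positive with respect to the orientation induced by the local colouring. Running Seifert's algorithm on the resulting diagram, I expect the checkerboard circles to appear as the Seifert circles, which would make this local sign computation transparent and identical at every generator.

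The third step is the passage from local to global. Since the product of positive elements is again positive, and in particular remains in $\vec{F}$, the global checkerboard colouring of the diagram of $g$ should restrict on each elementary piece to its local colouring; consequently the globally induced orientation agrees, piece by piece, with the local ones. The elementary positive tangles then assemble into a single diagram of $\vec{\CL}(g)$ in which every crossing retains its positive sign, and the theorem follows. One must also check that choosing a \emph{reduced} tree-pair representative does not spoil positivity: cancelling a caret common to $T_+$ and $T_-$ corresponds to a Reidemeister move deleting crossings in matched pairs, and I would verify that such a move never converts a positive crossing into a negative one.

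I expect the main obstacle to be exactly this interaction between the local and the global orientation. The sign of a crossing is defined relative to the orientation coming from the \emph{global} colouring, which is invisible in a single elementary piece; the crux of the argument is to show that positivity of $g$ forces the global colouring to restrict compatibly, so that local positivity really does imply global positivity. A secondary difficulty is the bookkeeping under caret cancellation, ensuring that reduction to a minimal representative preserves the all-positive crossing pattern rather than merely the underlying oriented link type.
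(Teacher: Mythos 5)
Your plan founders at its central claim: that the canonical diagram of $\vec{\CL}(g)$, read off from the Tait graph by the medial construction, has only positive crossings for $g\in\vec{F}_+$ (``inserts a standard tangle whose crossings are, by direct inspection, all positive''). This is false. The sign of a medial crossing is dictated by which half-plane the corresponding edge of the Tait graph lies in: edges of $\Gamma_+$ (coming from the top tree) yield positive crossings, while edges of $\Gamma_-$ (coming from the bottom tree) yield negative ones. A positive element of $F$ is represented by a pair $(T_+,T_-)$ in which $T_-$ is the standard right vine, so $\Gamma_-$ is a path with $n-1$ edges, all negative; the canonical diagram of $\vec{\CL}(g)$ therefore contains $n-1$ genuinely negative crossings, one attached to the string below each leaf of $T_+$ except the rightmost. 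Multiplying by a generator grafts a caret onto \emph{both} trees, so each elementary tangle you propose to inspect already carries one of these negative crossings. Consequently the local sign check you rely on fails at every single generator, and no amount of care about how the global checkerboard colouring restricts to the pieces (which you identify as the main obstacle, but which is not where the difficulty lies) can repair this.

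The actual content of the theorem is the elimination of these negative crossings. The paper's proof splits the diagram into an upper tangle $A$ (all positive, from $\Gamma_+$) and a lower tangle $B$ (all negative, from $\Gamma_-$), then uses the identification of $T_+$ with the binary transform of a $4$-valent plane tree to classify each leaf as left, middle, or right: the negative crossing below a left or middle leaf cancels against a positive crossing of $A$ by a single Reidemeister move of type II, while the negative crossing below a right leaf is removed by a detour move that trades one positive crossing of $A$ for another. These moves are local, independent, and strictly decrease the number of negative crossings, so iterating them produces a positive diagram. Your secondary concern about caret cancellation is likewise moot, since the argument is run on the standard (unreduced) representative with bottom tree the right vine. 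If you want to keep an inductive, generator-by-generator structure, you would still need to build these cancellation moves into the induction step; without them the proposal does not prove the statement.
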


Each positive element of the oriented group~$\vec{F}$ admits a description by means of a single finite 
rooted ternary trees $T$, which is turned into a binary tree 
via a   transform $\alpha(T)$ of the original tree.
The associated links 
 are defined diagrammatically. 
 This is illustrated in the first two figures, and defined in the next section. 
Positive oriented Thompson links are complicated in that their defining diagrams are highly non-minimal. In particular, there tend to be a lot of unknotted components. As we will see, removing these trivial components, together with another type of simplification, results in 
positive link diagrams whose crossing number is bounded above by the number of right leaves of the plane ternary tree~$T$. This is illustrated at the bottom of Figure \ref{knot5_2}, where the resulting link is the positive twist knot~$5_2$.

\begin{figure}[htb]
\begin{center}
\raisebox{-0mm}{\includegraphics[scale=0.6]{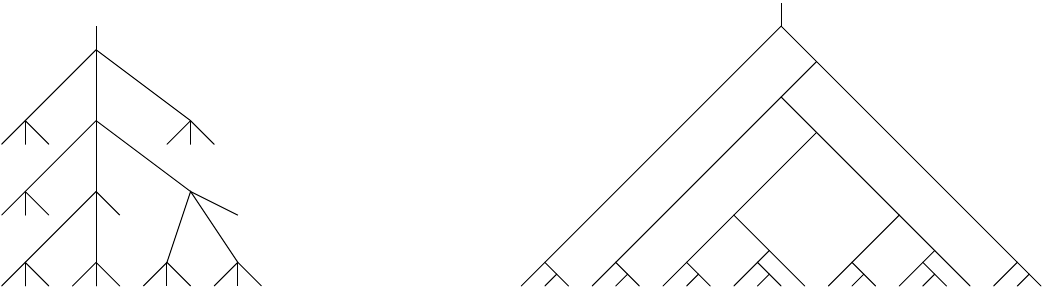}}
\caption{The $4$-regular rooted tree $T$ and its binary transform $\alpha(T)$.} \label{tree5_2}
\end{center}
\end{figure}

\begin{figure}[htb]
\begin{center}
\raisebox{-0mm}{\includegraphics[scale=0.5]{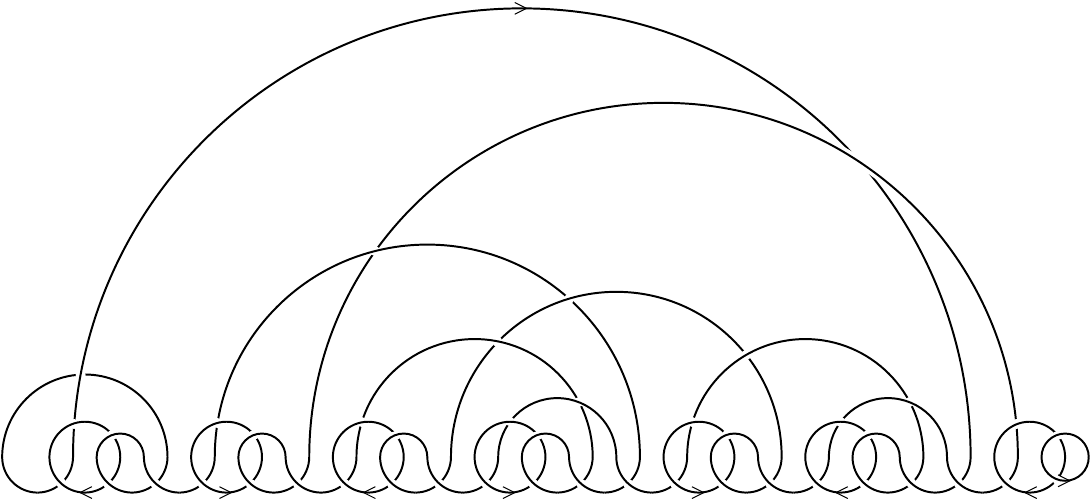}}

\bigskip
\bigskip
\raisebox{-0mm}{\includegraphics[scale=0.5]{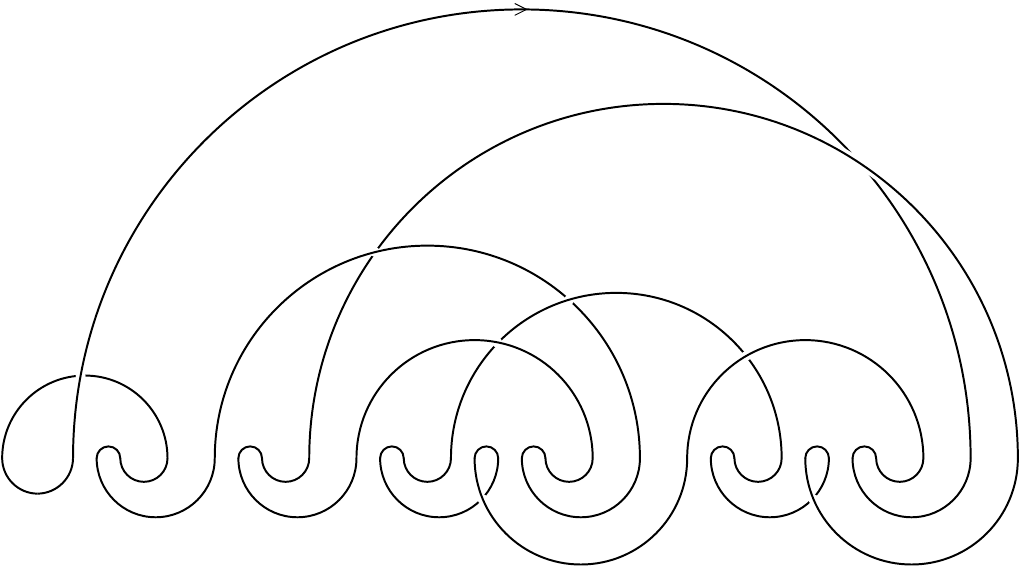}}

\caption{Positive oriented Thompson link associated with $T$, 
 and its knotted component $5_2$.} \label{knot5_2}
\end{center}
\end{figure}
 
\section{Preliminaries and notation} \label{secdue}

In this section we review the basic definitions and properties of the Thompson group $F$, the oriented Thompson group $\vec{F}$, the Brown-Thompson group $F_3$, and Jones's construction of knots from elements of Thompson group $F$.
The reader is referred to \cite{CFP, B} for more information
on the Thompson group,
to \cite{Brown} for $F_3$, 
to \cite{Jo14, GS, Ren} for $\vec{F}$,
to \cite{Jo14, Jo18, A} for the construction of links from elements of $F$ and $\vec{F}$.

The Thompson group   admits the following infinite presentation
$$
F=\langle x_0, x_1, \ldots \; | \; x_nx_k=x_kx_{n+1} \quad \forall \; k<n\rangle\; .
$$
Note that the elements $x_0$ and $x_1$ are enough to generate $F$.
The monoid generated by $x_0, x_1, \ldots$ is denoted by $F_+$ and its elements are said to be positive.
In this paper we will make use of a graphical description of the elements of $F$.
Every element of $F$ can be described by a pair of rooted planar binary trees $(T_+,T_-)$ with the same number of leaves  \cite{CFP}.
These pairs of binary trees are called binary tree diagrams.
We draw such tree diagrams in the plane, with one tree upside down on top of the other and the leaves sitting on the natural numbers of the $x$-axis (see Figure \ref{genF} for the generators of $F$).
The binary tree $T_+$, namely the one in the upper-half plane, is called the top tree. The other one is the so-called bottom tree. 
Two pairs of binary trees are equivalent when they differ by a pair of opposing carets, see Figure \ref{caret}.
This equivalence relation allows to define the multiplication in $F$ by the formula $(T_+,T)\cdot (T,T_-):=(T_+,T_-)$. The trivial element is represented by any pair $(T,T)$ and $(T_+,T_-)^{-1}=(T_-,T_+)$.

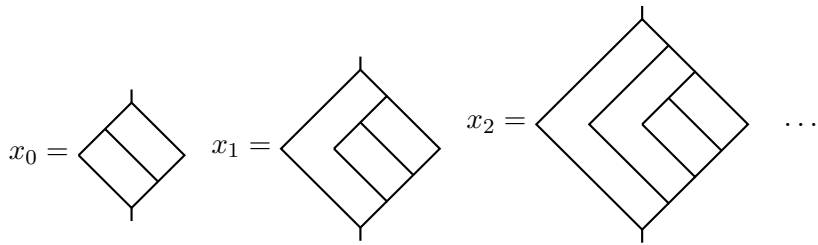
\begin{figure}[h]
\[
\begin{tikzpicture}[x=.35cm, y=.35cm,
    every edge/.style={
        draw,
      postaction={decorate,
                    decoration={markings}
                   }
        }
]

\node at (-1.5,0) {$\scalebox{1}{$x_0=$}$};
\node at (-1.25,-3) {\;};

\draw[thick] (0,0) -- (2,2)--(4,0)--(2,-2)--(0,0);
 \draw[thick] (1,1) -- (2,0)--(3,-1);

 \draw[thick] (2,2)--(2,2.5);

 \draw[thick] (2,-2)--(2,-2.5);

\end{tikzpicture}
\;\;
\begin{tikzpicture}[x=.35cm, y=.35cm,
    every edge/.style={
        draw,
      postaction={decorate,
                    decoration={markings}
                   }
        }
]

\node at (-3.5,0) {$\scalebox{1}{$x_1=$}$};
\node at (-1.25,-3.25) {\;};

\draw[thick] (2,2)--(1,3)--(-2,0)--(1,-3)--(2,-2);

\draw[thick] (0,0) -- (2,2)--(4,0)--(2,-2)--(0,0);
 \draw[thick] (1,1) -- (2,0)--(3,-1);

 \draw[thick] (1,3)--(1,3.5);
 \draw[thick] (1,-3)--(1,-3.5);

\end{tikzpicture}
\;\;
\begin{tikzpicture}[x=.35cm, y=.35cm,
    every edge/.style={
        draw,
      postaction={decorate,
                    decoration={markings}
                   }
        }
]

\node at (-5.5,0) {$\scalebox{1}{$x_2=$}$};
\node at (6,0) {$\ldots$};

\draw[thick] (2,2)--(1,3)--(-2,0)--(1,-3)--(2,-2);
\draw[thick] (1,3)--(0,4)--(-4,0)--(0,-4)--(1,-3); 

\draw[thick] (0,0) -- (2,2)--(4,0)--(2,-2)--(0,0);
 \draw[thick] (1,1) -- (2,0)--(3,-1);

 \draw[thick] (0,4)--(0,4.5);
 \draw[thick] (0,-4)--(0,-4.5);

\end{tikzpicture}
\]
\caption{The generators of $F$.}\label{genF}
\end{figure}

\begin{figure}[h] 
\[
\begin{tikzpicture}[x=1cm, y=1cm,
    every edge/.style={
        draw,
      postaction={decorate,
                    decoration={markings}
                   }
        }
]

\draw[thick] (0,0)--(.5,.5)--(1,0)--(.5,-.5)--(0,0);
\draw[thick] (.5,.5)--(.5,.75);
\draw[thick] (.5,-.5)--(.5,-.75);
\node at (0,-1.2) {$\;$};
\end{tikzpicture}
\qquad 
\begin{tikzpicture}[x=.35cm, y=.35cm,
    every edge/.style={
        draw,
      postaction={decorate,
                    decoration={markings}
                   }
        }
]

\node at (-1.25,-3.25) {\;};

\draw[thick] (2,2)--(1,3)--(-1,1);
\draw[thick] (-1,-1)--(1,-3)--(2,-2);

\draw[thick] (2,2)--(4,0)--(2,-2);
 \draw[thick] (0,2) -- (2,0)--(3,-1);
 \draw[thick,red] (0,0) -- (-1,1)--(-2,-0)--(-1,-1)--(0,0);

 \draw[thick] (1,3)--(1,3.5);
 \draw[thick] (1,-3)--(1,-3.5);

\end{tikzpicture}
\]
\caption{A pair of opposing carets and a pair of trees equivalent to $x_0$.} \label{caret}
\end{figure}
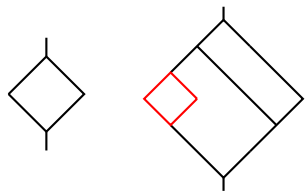

The shift homomorphism  $\varphi : F\to  F$ is an injective group homomorphism defined graphically  as
\[
\begin{tikzpicture}[x=1cm, y=1cm,
    every edge/.style={
        draw,
      postaction={decorate,
                    decoration={markings}
                   }
        }
]

\node at (-.45,0) {$\scalebox{1}{$\varphi$:}$};

\draw[thick] (0,0)--(.5,.5)--(1,0)--(.5,-.5)--(0,0);
\node at (1.5,0) {$\scalebox{1}{$\mapsto$}$};

\node at (.5,0) {$\scalebox{1}{$g$}$};
\node at (.5,.-.75) {$\scalebox{1}{}$};

 \draw[thick] (.5,.65)--(.5,.5);
 \draw[thick] (.5,-.65)--(.5,-.5);
 
\end{tikzpicture}
\begin{tikzpicture}[x=1cm, y=1cm,
    every edge/.style={
        draw,
      postaction={decorate,
                    decoration={markings}
                   }
        }
]
 
\node at (2.5,0) {$\scalebox{1}{$g$}$};

\draw[thick] (1.5,0)--(2.25,.75)--(3,0)--(2.25,-.75)--(1.5,0);
\draw[thick] (2,0)--(2.5,.5)--(3,0)--(2.5,-.5)--(2,0);
 
 \node at (1.5,.-.75) {$\scalebox{1}{}$};


 \draw[thick] (2.25,.75)--(2.25,.9);
 \draw[thick] (2.25,-.75)--(2.25,-.9);

\end{tikzpicture}
\]
This homomorphism maps $x_i$ to $x_{i+1}$ for all $i\geq 0$. The range of the   shift homomorphism is the subgroup of $F$
 consisting of the elements that act trivially on $[0,1/2]$. 

We now recall two different and equivalent descriptions of Jones's construction 
  of knots and links from elements of $F$.
  
  \bigskip
  \noindent
\textbf{First method}.  
We  illustrate this method with $x_0x_1$ as an example.
The idea is to construct a Tait diagram $\Gamma(T_+,T_-)$ from a binary tree diagram $(T_+,T_-)$ in $F$.
We put the vertices of $\Gamma(T_+,T_-)$ on the half integers. 
For $x_0x_1$ these points are $(1/2,0)$, $(3/2,0)$, $(5/2,0)$, $(7/2,0)$.
The edges of $\Gamma(T_+,T_-)$ pass transversally through the edges of the top tree sloping up from left to right (we call them West-North edges, or simply WN$=$\rotatebox[origin=tr]{-45}{|})
and the edges of the bottom tree sloping down from left to right (we refer to them by West-South edges, or just  WS$=$\rotatebox[origin=tr]{45}{|}).
\[
\begin{tikzpicture}[x=.35cm, y=.35cm,
    every edge/.style={
        draw,
      postaction={decorate,
                    decoration={markings}
                   }
        }
]

\node at (-4,0) {$\scalebox{1}{$x_0x_1=$}$};
\node at (-1.25,-3) {\;};

\draw[thick] (0,0) -- (3,3)--(6,0)--(3,-3)--(0,0);
 \draw[thick] (3,1) -- (2,0)--(4,-2);

 \draw[thick] (2,2)--(5,-1);

 \draw[thick] (3,-3)--(3,-3.5);
 \draw[thick] (3,3)--(3,3.5);


\draw[thick, red] (-1,0) to[out=90,in=90] (1,0);
\draw[thick, red] (-1,0) to[out=90,in=180] (2,2.25);
\draw[thick, red] (2,2.25) to[out=0,in=90] (5,0);
\draw[thick, red] (1,0) to[out=90,in=90] (3,0);
\draw[thick, red] (-1,0) to[out=-90,in=-90] (1,0);
\draw[thick, red] (1,0) to[out=-90,in=-90] (3,0);
\draw[thick, red] (3,0) to[out=-90,in=-90] (5,0);

\fill (-1,0)  circle[radius=1.5pt];
\fill (1,0)  circle[radius=1.5pt];
\fill (3,0)  circle[radius=1.5pt];
\fill (5,0)  circle[radius=1.5pt];
\end{tikzpicture}
\qquad
\begin{tikzpicture}[x=.35cm, y=.35cm,
    every edge/.style={
        draw,
      postaction={decorate,
                    decoration={markings}
                   }
        }
] 

 \node at (-3.75,0) {$\scalebox{1}{$\Gamma(x_0x_1)=$}$};
 \node at (-2.5,-3) {$\scalebox{1}{$\,$}$};

\draw[thick, red] (-1,0) to[out=90,in=90] (1,0);
\draw[thick, red] (-1,0) to[out=90,in=90] (5,0);
\draw[thick, red] (1,0) to[out=90,in=90] (3,0);
\draw[thick, red] (-1,0) to[out=-90,in=-90] (1,0);
\draw[thick, red] (1,0) to[out=-90,in=-90] (3,0);
\draw[thick, red] (3,0) to[out=-90,in=-90] (5,0);

\fill (-1,0)  circle[radius=1.5pt];
\fill (1,0)  circle[radius=1.5pt];
\fill (3,0)  circle[radius=1.5pt];
\fill (5,0)  circle[radius=1.5pt];

\end{tikzpicture}
\]
As shown in \cite[Lemma 4.1.4]{Jo14} there is a bijection between the graphs of  the form $\Gamma(T_+,T_-)$ and the pairs of trees $(T_+,T_-)$.
We denote by $\Gamma_+(T_+)$ and $\Gamma_-(T_-)$ the subgraphs of $\Gamma(T_+,T_-)$ contained in the upper and lower-half plane, respectively. Sometimes, to ease the notation we will use the symbols $\Gamma_+$ and $\Gamma_-$.  Since a Tait diagram is a signed graph, we say that the edges of $\Gamma_+$ (resp. $\Gamma_-$) are positive (resp. negative). 

\begin{remark} \label{remarkGamma}
The graphs of the type $\Gamma(T_+,T_-)$ may always be assumed to satisfy the following properties
\begin{enumerate}
\item the vertices are $(0,0)$, \ldots , $(N,0)$; 
\item each vertex other than $(0,0)$  is connected to exactly one vertex to its left;
\item each edge $e$ can be parametrized by a function $(x_e(t),y_e(t))$ with $x'_e(t)>0$, for all $t\in [0,1]$, and either $y_e(t)>0$, for all $t\in ]0,1[$ or $y_e(t)<0$, for all $t\in ]0,1[$;
\end{enumerate}
see \cite[Proposition 4.1.3.]{Jo14} and \cite[Section 3]{A}. In particular, every vertex (except the leftmost) is the target of exactly two edges, one in the lower half-plane and one in the upper-half plane.
\end{remark}

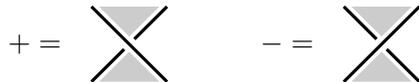
\begin{figure}
 \[\begin{tikzpicture}[every path/.style={very thick}, every node/.style={transform shape, knot crossing, inner sep=1.5pt}]

\node (aaaa) at (-.75,0.5) {$\scalebox{1}{$+=$}$}; 

\node (a1) at (0,0) {}; 
\node (a2) at (0,1) {};
\node (a3) at (1,1) {};
\node (a4) at (1,0) {};
\node (a5) at (0.5,0.5) {};
%
%
\draw (a1.center) .. controls (a1.4 north east) .. (a5) .. controls (a5.4 north east) .. (a3.center);
\draw (a2.center) .. controls (a2.4 south east) .. (a5.center) .. controls (a5.4 south east) .. (a4.center);

  \fill [color=black,opacity=0.2]
               (0.1,0) -- (.5,.4) -- (.9,0) --(.1, 0);

  \fill [color=black,opacity=0.2]
               (0.1,1) -- (.5,.6) -- (.9,1) --(.1, 1); 
\end{tikzpicture}
\qquad 
\qquad\qquad
\begin{tikzpicture}[every path/.style={very thick}, every node/.style={transform shape, knot crossing, inner sep=1.5pt}]

\node (aaaa) at (1.5,0.5) {$\scalebox{1}{$-=$}$}; 

\node (b1) at (2.25,0) {}; 
\node (b2) at (2.25,1) {};
\node (b3) at (3.25,1) {};
\node (b4) at (3.25,0) {};
\node (b5) at (2.75,0.5) {};
 
\draw (b1.center) .. controls (b1.4 north east) .. (b5.center) .. controls (b5.4 north east) .. (b3.center);
\draw (b2.center) .. controls (b2.4 south east) .. (b5) .. controls (b5.4 south east) .. (b4.center);

  \fill [color=black,opacity=0.2]
               (2.35,0) -- (2.75,.4) -- (3.15,0) --(2.35, 0); 

  \fill [color=black,opacity=0.2]
               (2.35,1) -- (2.75,.6) -- (3.15,1) --(2.35, 1);

\end{tikzpicture}\] 
\caption{A positive and a negative crossing. }\label{signscross}
\end{figure}

In order to obtain a knot diagram we need two further steps. First we draw the medial graph $M(\Gamma(T_+,T_-))$ of $\Gamma(T_+,T_-)$. 
In general, given a connected plane graph $G$,
the vertices of its medial graph $M(G)$ sit on every edge of $G$ and an edge of $M(G)$ connect two vertices if they are on adjacent edges of the same face.
Below we will provide an example in our context.
Now all the vertices of $M(\Gamma(T_+,T_-))$ have degree $4$ and to obtain a knot/link diagram we need to turn the vertices into crossings. 
For the vertices in the upper-half plane we use the crossing $\slashoverback$,
while for those in the lower-half plane we use $\backoverslash$. 

Recall that given a link diagram $L$ we may shade it in black and white (we adopt the convention that the colour of the unbounded region is white). This yields a surface in $\mathbb{R}^3$ whose boundary is the link $L$. This procedure is called checkerboard shading of the link diagram.

We point out that in the 
checkerboard shading of the link diagram 
$L(T_+,T_-)$
obtained above, 
the crossings corresponding to vertices on edges of $\Gamma_+$ are positive and the crossings corresponding to vertices on edges of $\Gamma_-$
 are negative (in the sense of Figure \ref{signscross}), see \cite[Section 5.3.2]{Jo14}.
Here are $M(\Gamma(x_0x_1))$ and $\CL(x_0x_1)$.
\[
\phantom{This text} 
\begin{tikzpicture}[x=.6cm, y=.6cm, every path/.style={
 thick}, 
every node/.style={transform shape, knot crossing, inner sep=1.5pt}]
\node (aaaa) at (-3,1) {$\scalebox{1}{$M(\Gamma(T_+,T_-))=$}$}; 

\node (a2) at (1,0) {};
\node (a3) at (2,0) {};
\node (a4) at (3,0) {};
\node (b2) at (1,1) {};
\node (b3) at (2,1) {};
\node (b4) at (3,2) {};

\node (appo) at (1,2) {};

\draw (a2.center) .. controls (a2.4 north west) and (b2.4 south west) .. (b2.center);
\draw (a2.center) .. controls (a2.4 north east) and (b2.4 south east) .. (b2.center);

\draw (b2.center) .. controls (b2.8 north west) and (b4.4 south west) .. (b4.center);

\draw (b2.center) .. controls (b2.4 north east) and (b3.4 north west) .. (b3.center);
\draw (a2.center) .. controls (a2.4 south east) and (a3.4 south west) .. (a3.center);
\draw (a3.center) .. controls (a3.4 north east) and (b3.4 south east) .. (b3.center);
\draw (a3.center) .. controls (a3.4 north west) and (b3.4 south west) .. (b3.center);
\draw (a2.center) .. controls (a2.16 south west) and (appo.8 west) .. (appo.center);
\draw (appo.center) .. controls (appo.4 east) and (b4.4 north west) .. (b4.center);

\draw (a3.center) .. controls (a3.4 south east) and (a4.4 south west) .. (a4.center);
\draw (a4.center) .. controls (a4.16 south east) and (b4.16 north east) .. (b4.center);
\draw (a4.center) .. controls (a4.4 north east) and (b4.4 south east) .. (b4.center);

\draw (b3.center) .. controls (b3.4 north east) and (a4.4 north west) .. (a4.center);

\end{tikzpicture}
\qquad\qquad\qquad\qquad 
\begin{tikzpicture}[x=.6cm, y=.6cm, every path/.style={
 thick}, 
every node/.style={transform shape, knot crossing, inner sep=1.5pt}]

\node (aaaa) at (-3,1) {$\scalebox{1}{$\CL(T_+,T_-)=$}$}; 

\node (a2) at (1,0) {};
\node (a3) at (2,0) {};
\node (a4) at (3,0) {};
\node (b2) at (1,1) {};
\node (b3) at (2,1) {};
\node (b4) at (3,2) {};

\node (appo) at (1,2) {};

\draw (a2.center) .. controls (a2.4 north west) and (b2.4 south west) .. (b2.center);
\draw (a2) .. controls (a2.4 north east) and (b2.4 south east) .. (b2);

\draw (b2) .. controls (b2.8 north west) and (b4.4 south west) .. (b4.center);

\draw (b2.center) .. controls (b2.4 north east) and (b3.4 north west) .. (b3);
\draw (a2.center) .. controls (a2.4 south east) and (a3.4 south west) .. (a3);
\draw (a3) .. controls (a3.4 north east) and (b3.4 south east) .. (b3);
\draw (a3.center) .. controls (a3.4 north west) and (b3.4 south west) .. (b3.center);
\draw (a2) .. controls (a2.16 south west) and (appo.8 west) .. (appo.center);
\draw (appo.center) .. controls (appo.4 east) and (b4.4 north west) .. (b4);

\draw (a3.center) .. controls (a3.4 south east) and (a4.4 south west) .. (a4);
\draw (a4.center) .. controls (a4.16 south east) and (b4.16 north east) .. (b4.center);
\draw (a4) .. controls (a4.4 north east) and (b4.4 south east) .. (b4);

\draw (b3.center) .. controls (b3.4 north east) and (a4.4 north west) .. (a4.center);

\end{tikzpicture}
\]
\begin{remark}
We point out that there are slightly different notations in the literature.
For instance in \cite[Definition 3.4]{Jo18} and \cite{ACJ} the link diagram $L(g)$ denotes   
 the unique representation of $g$ as a pair of binary trees with minimal number of vertices
(similarly in \cite{ACJ},   $\vec{\CL}(g)$ denotes the oriented link diagram produced by the minimal representative of $g\in\vec F$).
In the present article we prefer to denote $g$ explicitly by a pair of binary trees $(T_+,T_-)$ in order to emphasise the role of the trees (this is in the same spirit as in \cite[Definition 4.1.2]{Jo14}, where the Tait diagrams
of the link constructed from the minimal representatives of $g$ can either be denoted by $\Gamma(T_+,T_-)$ or $\Gamma(g)$). 
A slightly different notation is employed in \cite{GS} because a different description of $F$ as a diagram group is used.
\end{remark}
So far we have obtained an unoriented knot/link.
In general the link diagrams obtained from elements of $F$
do not admit a natural orientation.
However, there is a natural orientation when the group element is in the oriented Thompson group $\vec{F}$, whose definition we now recall.
Shade the   link diagram $\CL(T_+,T_-)$ in black and white 
 (the unbounded region is white). This yields a surface in $\mathbb{R}^3$ whose boundary is the link $\mathcal{L}(T_+,T_-)$ (see \cite[Section 5.3.2]{Jo14}). 
The oriented Thompson group $\vec{F}$ can be defined as 
\begin{align*}
\vec F&:=\{(T_+,T_-)\in F\; | \; \Gamma(T_+,T_-) \text{ is bipartite} \}\; .
\end{align*}
Equivalently, the elements of $\vec{F}$ have Tait diagram  $2$-colorable.
We denote the colours by $\{+, -\}$.
We recall that if the Tait graph is $2$-colorable, then there are exactly two colorings. By convention we choose the one
in which the leftmost vertex is assigned the colour $+$. We denote by $\vec{F}_+$ the monoid $\vec{F}\cap F_+$.

By construction the vertices of the graph $\Gamma(T_+, T_-)$ sit in the black regions and each one has been assigned with a colour $+$ or $-$. 
These colours determine an orientation of the surface and of the boundary ($+$ means that the region is positively oriented). 
It can be easily seen that the graph $\Gamma(x_0x_1)$ is bipartite and thus $x_0x_1$ is in $\vec{F}$ (this element is actually one of the three natural generators of $\vec{F}$). 
Here is the oriented link associated with $x_0x_1$.
\[
\begin{tikzpicture}[x=.6cm, y=.6cm, every path/.style={
 thick}, 
every node/.style={transform shape, knot crossing, inner sep=1.5pt}]

\node (aaaa) at (-3,1) {$\scalebox{1}{$\vec\CL(T_+,T_-)=$}$}; 

\node (a2) at (1,0) {};
\node (a3) at (2,0) {};
\node (a4) at (3,0) {};
\node (b2) at (1,1) {};
\node (b3) at (2,1) {};
\node (b4) at (3,2) {};

\node (appo) at (1,2) {};

\draw[->] (a2.center) .. controls (a2.4 north west) and (b2.4 south west) .. (b2.center);
\draw[->] (a2) .. controls (a2.4 north east) and (b2.4 south east) .. (b2);

\draw[->] (b2) .. controls (b2.8 north west) and (b4.4 south west) .. (b4.center);

\draw[->] (b2.center) .. controls (b2.4 north east) and (b3.4 north west) .. (b3);
\draw (a2.center) .. controls (a2.4 south east) and (a3.4 south west) .. (a3);
\draw (a3) .. controls (a3.4 north east) and (b3.4 south east) .. (b3);
\draw[<-] (a3.center) .. controls (a3.4 north west) and (b3.4 south west) .. (b3.center);
\draw[<-] (a2) .. controls (a2.16 south west) and (appo.8 west) .. (appo.center);
\draw (appo.center) .. controls (appo.4 east) and (b4.4 north west) .. (b4);

\draw (a3.center) .. controls (a3.4 south east) and (a4.4 south west) .. (a4);
\draw (a4.center) .. controls (a4.16 south east) and (b4.16 north east) .. (b4.center);
\draw[->] (a4) .. controls (a4.4 north east) and (b4.4 south east) .. (b4);

\draw (b3.center) .. controls (b3.4 north east) and (a4.4 north west) .. (a4.center);

 \node (x2) at (.5,.5) {$\scalebox{0.75}{$+$}$};
\node (x3) at (1.45,.5) {$\scalebox{0.75}{$-$}$};
\node (x4) at (2.5,.5) {$\scalebox{0.75}{$+$}$};
\node (x5) at (3.4,.5) {$\scalebox{0.75}{$-$}$};
 
\end{tikzpicture} 
\]

The following result will come in handy in the next section.
\begin{lemma}\label{lemmasignoriented}
For any $(T_+,T_-)\in \vec{F}$, the crossings of the oriented link $\vec{\CL}(T_+,T_-)$ corresponding to the edges of $\Gamma_+$ ($\, \Gamma_-$) are positive (negative\footnote{Here positive and negative are understood in the sense of oriented link diagrams, see Figure \ref{figpos}.}, respectively).
\end{lemma}
\begin{proof}
As explained in Remark \ref{remarkGamma}, each vertex of $\Gamma(T_+,T_-)$ (except the first one) is the the target of two vertices: one in the upper-half plane, one in the lower-half plane.
We now look at   each target vertex and the two edges.
Below on the left we show the case where the vertex has been assigned color $+$, while on the right is the case where the color is $-$.
\[
\begin{tikzpicture}[x=.6cm, y=.6cm, every path/.style={
 thick}, 
every node/.style={transform shape, knot crossing, inner sep=1.5pt}]


\node (a1) at (0,0) {};
\node (b1) at (0,2) {}; 

\node (appo) at (1,2) {};

\draw[<-] (a1) .. controls (a1.4 north east) and (b1.4 south east) .. (b1);
\draw[->] (-.25,.25)--(.25,-.25);
\draw (-.25,-.25)--(-.05,-.05);

\draw[<-] (-.25,1.75)--(.25,2.25);
 \draw[->] (-.25,2.25)--(-.05,2.05);

 \node (x2) at (1,.95) {$\scalebox{0.75}{$+$}$}; 
 
\end{tikzpicture} 
\qquad\qquad\qquad\qquad
\begin{tikzpicture}[x=.6cm, y=.6cm, every path/.style={
 thick}, 
every node/.style={transform shape, knot crossing, inner sep=1.5pt}]


\node (a1) at (0,0) {};
\node (b1) at (0,2) {}; 

\node (appo) at (1,2) {};

\draw[->] (a1) .. controls (a1.4 north east) and (b1.4 south east) .. (b1);
\draw[<-] (-.25,.25)--(.25,-.25);
\draw (-.25,-.25)--(-.05,-.05);

\draw[->] (-.25,1.75)--(.25,2.25);
 \draw[<-] (-.25,2.25)--(-.05,2.05);

 \node (x2) at (1,.95) {$\scalebox{0.75}{$-$}$}; 
 
\end{tikzpicture} 
\]
\end{proof}

\noindent
\textbf{Second method}.
The construction of the underlying unoriented links from elements of the Thompson group can also be obtained in the following equivalent way, \cite{Jo14, Jo18}.
Starting from a tree diagram in $F$, first we turn all the $3$-valent vertices into $4$-valent and join the two roots of the trees, then we turn all the $4$-valent vertices into crossings 
as shown in Figure \ref{rulesknot}, where these vertices and the four incident edges are replaced by "forks".
We exemplify this procedure with $x_0x_1$. 
\begin{figure}
\[
\begin{tikzpicture}[x=.3cm, y=.3cm,
    every edge/.style={
        draw,
      postaction={decorate,
                    decoration={markings}
                   }
        }
]

\draw[thick] (1,1)--(1,2);
\draw[thick] (0,0) --(1,1)--(2,0);

\node at (0,-1.2) {$\;$};

\end{tikzpicture}\quad
\begin{tikzpicture}[x=.3cm, y=.3cm,
    every edge/.style={
        draw,
      postaction={decorate,
                    decoration={markings}
                   }
        }
]

\draw[thick] (1,0)--(1,2);
\draw[thick] (0,0) --(1,1)--(2,0);

\node at (0,-1.2) {$\;$};
\node at (-2,1) {$\scalebox{1}{$\mapsto\; $}$};

\end{tikzpicture}\qquad
\begin{tikzpicture}[x=.3cm, y=.3cm,
    every edge/.style={
        draw,
      postaction={decorate,
                    decoration={markings}
                   }
        }
]

\draw[thick] (1,0)--(1,2);
\draw[thick] (0,0) --(1,1)--(2,0);

\node at (0,-1.2) {$\;$};

\end{tikzpicture}\quad
\begin{tikzpicture}[x=.3cm, y=.3cm,
    every edge/.style={
        draw,
      postaction={decorate,
                    decoration={markings}
                   }
        }
]

\draw[thick] (1,0)--(1,.35);
\draw[thick] (1,.75)--(1,2);
\draw[thick] (0,0) to[out=90,in=90] (2,0);

\node at (-2,1) {$\scalebox{1}{$\mapsto\; $}$};

\node at (0,-1.2) {$\;$};

\end{tikzpicture}
\qquad \begin{tikzpicture}[x=.3cm, y=.3cm,
    every edge/.style={
        draw,
      postaction={decorate,
                    decoration={markings}
                   }
        }
]

\draw[thick] (1,0)--(1,2);
\draw[thick] (0,2) --(1,1)--(2,2);

\node at (0,-1.2) {$\;$};

\end{tikzpicture}\quad
\begin{tikzpicture}[x=.3cm, y=.3cm,
    every edge/.style={
        draw,
      postaction={decorate,
                    decoration={markings}
                   }
        }
]

\draw[thick] (1,2)--(1,1.65);
\draw[thick] (1,1.25)--(1,0);
\draw[thick] (0,2) to[out=-90,in=-90] (2,2);

\node at (-2,1) {$\scalebox{1}{$\mapsto\; $}$};

\node at (0,-1.2) {$\;$};

\end{tikzpicture}
\]
\caption{The rules needed for obtaining $\CL(g)$. }\label{rulesknot}
\end{figure}
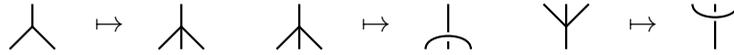
\[
\begin{tikzpicture}[x=.35cm, y=.35cm,
    every edge/.style={
        draw,
      postaction={decorate,
                    decoration={markings}
                   }
        }
]
 
\draw[thick] (0,0) -- (3,3)--(6,0)--(3,-3)--(0,0);
 \draw[thick] (3,1) -- (2,0)--(4,-2);

 \draw[thick] (2,2)--(5,-1);

 \draw[thick] (3,-3)--(3,-3.5);
 \draw[thick] (3,3)--(3,3.5);

\node at (7,0) {$\scalebox{1}{$\mapsto$}$};

\node at (0,-1.2) {\phantom{$\frac{T_+}{T_-}=$}};
\node at (0,-3) {\phantom{$\frac{T_+}{T_-}=$}};

\end{tikzpicture}
%
%
%
%
%
%
%
%
\begin{tikzpicture}[x=.35cm, y=.35cm,
    every edge/.style={
        draw,
      postaction={decorate,
                    decoration={markings}
                   }
        }
]

 \draw[thick] (3,3.5) -- (3,3);  
 \draw[thick] (3,-3.5) -- (3,-3);  

\draw[thick] (0,0) -- (3,3)--(6,0)--(3,-3)--(0,0);
 \draw[thick] (3,1) -- (2,0)--(4,-2);

 \draw[thick] (2,2)--(5,-1);

\node at (7,0) {$\scalebox{1}{$\mapsto$}$};

 \draw[thick] (2,2)--(1,0)--(3,-3);
 \draw[thick] (3,1)--(3,0)--(4,-2);
 \draw[thick] (5,-1)--(5,0)--(3,3);

\node at (0,-1.2) {\phantom{$\frac{T_+}{T_-}=$}};
\node at (0,-3) {\phantom{$\frac{T_+}{T_-}=$}};

\end{tikzpicture}
\begin{tikzpicture}[x=.35cm, y=.35cm,
    every edge/.style={
        draw,
      postaction={decorate,
                    decoration={markings}
                   }
        }
]

 \draw[thick] (-1,3) to[out=90,in=90] (3,3);  
 \draw[thick] (-1,-3) to[out=-90,in=-90] (3,-3);  
\draw[thick] (-1,-3) --(-1,3);

\draw[thick] (0,0) -- (3,3)--(6,0)--(3,-3)--(0,0);
 \draw[thick] (3,1) -- (2,0)--(4,-2);

 \draw[thick] (2,2)--(5,-1);

\node at (7,0) {$\scalebox{1}{$\mapsto$}$};

 \draw[thick] (2,2)--(1,0)--(3,-3);
 \draw[thick] (3,1)--(3,0)--(4,-2);
 \draw[thick] (5,-1)--(5,0)--(3,3);

\node at (0,-1.2) {\phantom{$\frac{T_+}{T_-}=$}};
\node at (0,-3) {\phantom{$\frac{T_+}{T_-}=$}};

\end{tikzpicture}
\begin{tikzpicture}[x=.35cm, y=.35cm,
    every edge/.style={
        draw,
      postaction={decorate,
                    decoration={markings}
                   }
        }
]

\draw[thick] (2,0) to[out=90,in=90] (4,0);   
\draw[thick] (0,0) to[out=90,in=90] (3,.75);   
\draw[thick] (1,1.25) to[out=90,in=90] (6,0);   
\draw[thick] (3,2.4) to[out=90,in=90] (-1,0);   
\draw[thick] (1,.9)--(1,0);
\draw[thick] (3,.4)--(3,0);
\draw[thick] (3,2.1) to[out=-90,in=90] (5,0);

\draw[thick] (4,0) to[out=-90,in=-90] (6,0);   
\draw[thick] (2,0) to[out=-90,in=-90] (5,-.8);   
\draw[thick] (0,0) to[out=-90,in=-90] (3,-1.3);   
\draw[thick] (-1,0) to[out=-90,in=-90] (2,-2);   
\draw[thick] (5,0)--(5,-.4);
\draw[thick] (3,0)--(3,-.9);
\draw[thick] (1,0) to[out=-90,in=90] (2,-1.6);
 

\node at (0,-1.2) {$\;$};

\end{tikzpicture}
\qquad
\]
In the first step (when we turn binary tree diagrams into ternary tree diagrams)
we are actually using an injective group homomorphism $\phi: F\to F_3$ defined by Jones in \cite[Section 4]{Jo18}.

The Tait diagram of the link diagram (see \cite[Chapter X]{Reide1}) obtained in this way is exactly the one described with the previous procedure.
When we consider elements of $\vec{F}$,
the coloring of this graph determines the orientation of the link.
In passing, we mention that by means of planar algebras \cite{jo2} and this construction of knots, several unitary representations of both the Thompson group and the oriented Thompson group related to notable knot and graph invariants were defined \cite{Jo14, Jo16, ACJ, ABC, AiCo1, AiCo2} and investigated \cite{AJ, Jo19, TV, TV2}.

We recall that the bottom tree of a positive element in   $\vec{F}_+$ and  the corresponding graph $\Gamma_-$  have the following form 
\begin{eqnarray}\label{Tmeno}
&
\begin{tikzpicture}[x=.6cm, y=.6cm,
    every edge/.style={
        draw,
      postaction={decorate,
                    decoration={markings}
                   }
        }
]
\node (bbb) at (-2,-2) {$\scalebox{1}{$T_-=$}$}; 

\draw[thick] (0,0)--(4,-4)--(8,0);
\draw[thick] (4.5,-3.5)--(1,0);
\draw[thick] (5,-3)--(2,0);
\draw[thick] (7,-1)--(6,0);
\draw[thick] (7.5,-.5)--(7,0);
\draw[thick] (4,-4)--(4,-4.5);

\node (aaaa) at (5,-1) {$\scalebox{1}{$\ldots$}$}; 

\end{tikzpicture}
%
%
%
%
\qquad
\begin{tikzpicture}[x=.85cm, y=.85cm,
    every edge/.style={
        draw,
      postaction={decorate,
                    decoration={markings}
                   }
        }
]
\node (aaaa) at (-1.25,0) {$\scalebox{1}{$\Gamma_-=$}$}; 
\node (aaaa) at (3,0) {$\scalebox{1}{$\ldots$}$}; 
\node (aaaa) at (3,-1.75) {$\scalebox{1}{$\;$}$}; 

\fill (0,0)  circle[radius=1.5pt];
\fill (1,0)  circle[radius=1.5pt];
\fill (2,0)  circle[radius=1.5pt];
\fill (4,0)  circle[radius=1.5pt];
\fill (5,0)  circle[radius=1.5pt];

\draw[thick] (0,0) to[out=-90,in=-90] (1,0);
\draw[thick] (1,0) to[out=-90,in=-90] (2,0);
\draw[thick] (4,0) to[out=-90,in=-90] (5,0);
\end{tikzpicture}
\end{eqnarray}
By convention the coloring of $\Gamma_-$ is $+-+-+-\ldots$. 
Since the bottom tree of a positive element 
depends only on the number of leaves in the upper tree, sometimes we will use the notation $\vec{\CL}(T_+)$, instead of $\vec{\CL}(T_+,T_-)$.

It is well known that every element of the braid group may be expressed as the product of a positive braid and the inverse of a positive braid.
A similar result for the oriented Thompson group was proved by Ren \cite{Ren}:
\begin{proposition}
For any $g\in \vec{F}$, there exists $g_1, g_2\in\vec{F}_{+}$ such that $g=g_1g_2^{-1}$.
\end{proposition}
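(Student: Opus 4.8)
The plan is to factor $g$ through a common bottom tree, in direct analogy with writing a braid as a positive braid times the inverse of a positive braid. Write $g=(T_+,T_-)$ with $T_\pm$ having the same number $n$ of leaves, and let $R$ be the right vine, i.e.\ the bottom tree common to all positive elements (for which $\Gamma_-$ carries the colouring $+-+-\cdots$). Using the multiplication rule $(A,B)\cdot(B,C)=(A,C)$ together with $(A,B)^{-1}=(B,A)$, I would set
\[
g_1:=(T_+,R),\qquad g_2:=(T_-,R),
\]
so that $g_1 g_2^{-1}=(T_+,R)\cdot(R,T_-)=(T_+,T_-)=g$. Since the bottom tree of each of $g_1,g_2$ is the right vine, both automatically lie in $F_+$; the only remaining task is to arrange that both also lie in $\vec F$.

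Here the group structure does half the work. Because $\vec F$ is a subgroup of $F$, membership in $\vec F$ is a property of the group element, so from $g\in\vec F$ and $g_1\in\vec F$ one gets $g_2=g^{-1}g_1\in\vec F$ for free. Thus the whole problem collapses to producing a representative for which the single factor $g_1=(T_+,R)$ is oriented. As the bottom tree is the right vine, $\Gamma_-(R)$ is coloured $+-+-\cdots$, so $(T_+,R)\in\vec F$ precisely when the canonical $2$-colouring of $\Gamma_+(T_+)$ is this same alternating colouring of the baseline vertices.

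The main obstacle is that the colouring of $\Gamma_+(T_+)$ need not be alternating for the given diagram, and one cannot simply change $R$, since the right vine is unique for each leaf count. The remedy is stabilisation: inserting a pair of opposing carets leaves $g$ (hence its membership in $\vec F$) unchanged while altering the baseline colouring locally, and I would show that opposing carets can always be added so that the new top tree $T_+'$ has alternating $\Gamma_+$-colouring. The clean way to organise this is through the description of $\vec F_+$ by $4$-valent (ternary) trees and their binary transforms $\alpha(T)$ recalled above: an admissible top tree is exactly a binary transform $\alpha(S)$ of a ternary tree $S$, and expanding the bottom to a full right binary vine is the image of passing to the right ternary vine, whose binary transform realises precisely the alternating colouring.

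In fact I expect the slickest execution to bypass the colouring bookkeeping entirely by transporting everything across the isomorphism $\vec F\cong F_3$. In $F_3$ an element is a pair of ternary trees $(S_+,S_-)$ with equally many leaves, and ``positive'' means simply that the bottom tree is the right ternary vine $V$, with no bipartiteness constraint at all. The factorisation $(S_+,V)\cdot(V,S_-)=(S_+,S_-)$ then exhibits $g$ as a product of two positive elements of $F_3$ with a single inverse, both positive by inspection. I expect the only genuine work to be checking that this ternary factorisation pulls back, under $\alpha$, to the asserted decomposition $g=g_1g_2^{-1}$ with $g_1,g_2\in\vec F_+$; this is exactly the point at which the $4$-valent tree formalism of the previous section is indispensable.
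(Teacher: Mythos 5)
The paper does not actually prove this proposition: it is quoted from Ren's work \cite{Ren}, so there is no in-house argument to measure you against. Your proposal is correct, provided one commits to the second route you describe. In $F_3$ the identity $(S_+,S_-)=(S_+,V)\cdot(V,S_-)$, with $V$ the right ternary vine on the same number of leaves, is immediate from the multiplication rule; both factors lie in $F_{3,+}$ by the paper's very definition of positivity in $F_3$; and since $\alpha\colon F_3\to\vec F$ is a group isomorphism, applying it gives $g=\alpha\bigl((S_+,V)\bigr)\,\alpha\bigl((S_-,V)\bigr)^{-1}$. The only substantive input is the inclusion $\alpha(F_{3,+})\subseteq F_+\cap\vec F=\vec F_+$, which the paper records just before Lemma \ref{F3pos} and which is the easy half of that lemma; so the ``only genuine work'' you defer at the end is in fact no work at all --- a homomorphism automatically transports the factorisation. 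By contrast, your first route is not a proof as it stands: for a reduced representative $(T_+,T_-)$ of a general $g\in\vec F$, the pair $(T_+,R)$ with $R$ the binary right vine typically fails to lie in $\vec F$, because the canonical $2$-colouring of $\Gamma_+(T_+)$ (which is determined, as $\Gamma_+$ is connected and spanning) need not alternate along the baseline, and the assertion that opposing carets can always be inserted to repair this is precisely the content one would have to prove --- it is essentially equivalent to the $F_3$ description of $\vec F$. Since you correctly identify this difficulty and fall back on the isomorphism, the argument goes through.
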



An oriented knot/link is positive if it admits a knot diagram where all the crossings are positive, see Figure \ref{figpos}.

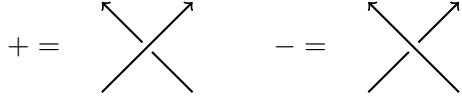
\begin{figure}
\[
\begin{tikzpicture}[x=.6cm, y=.6cm, every path/.style={
 thick}, 
every node/.style={transform shape, knot crossing, inner sep=1.5pt}]

\node (aaaa) at (-1.5,1) {$\scalebox{1}{$+=$}$}; 

\node (a1) at (0,0) {};
\node (a2) at (2,2) {};

\node (b3) at (0,2) {};
\node (b2) at (1,1) {};
\node (b1) at (2,0) {};

\draw[->] (a1.center) .. controls (a1.4 north east) and (a2.4 south west) .. (a2.center);
\draw (b1.center) .. controls (b1.4 north west) and (b2.4 south east) .. (b2);
\draw[->] (b2) .. controls (b2.4 north west) and (b3.4 south east) .. (b3.center);

\end{tikzpicture}
\qquad\qquad\qquad
\begin{tikzpicture}[x=.6cm, y=.6cm, every path/.style={
 thick}, 
every node/.style={transform shape, knot crossing, inner sep=1.5pt}]

\node (aaaa) at (-1.5,1) {$\scalebox{1}{$-=$}$}; 

\node (a1) at (0,0) {};
\node (a2) at (2,2) {};

\node (b3) at (0,2) {};
\node (b2) at (1,1) {};
\node (b1) at (2,0) {};

\draw[->] (b1.center) .. controls (b1.4 north west) and (b3.4 south east) .. (b3.center);
\draw (a1.center) .. controls (a1.4 north east) and (b2.4 south west) .. (b2);
\draw[->] (b2) .. controls (b2.4 north east) and (a2.4 south west) .. (a2.center);

\end{tikzpicture}
\]
\caption{A positive and a negative crossing in an oriented link.} \label{figpos}
\end{figure}

\begin{definition}\label{defF3}
The Brown-Thompson group $F_3$ consists of pairs of rooted planar ternary trees with the same number of leaves, \cite{Brown}.
We call such pairs \emph{ternary tree diagrams}. 
Two  tree diagrams are said to be equivalent if they differ by addition/deletions of pairs of opposing carets, that is 
\[
\begin{tikzpicture}[x=1cm, y=1cm,
    every edge/.style={
        draw,
      postaction={decorate,
                    decoration={markings}
                   }
        }
]

\draw[thick] (0,0)--(.5,.5)--(1,0)--(.5,-.5)--(0,0);
\draw[thick] (.5,.75)--(.5,-.75);
\node at (0,-1.2) {$\;$};
\node at (1.5,0) {$\leftrightarrow$};

\draw[thick] (2.25,.75)--(2.25,-.75);

\end{tikzpicture}
\] 
The Brown-Thompson group $F_3$ may also be defined by the following presentation  
$$
\langle y_0, y_1, \ldots \; | \; y_ny_l=y_ly_{n+2} \quad \forall \; l<n\rangle\, .
$$
The elements $y_0, y_1,   y_{2}$ generate $F_3$.
See Figure \ref{generatorsF3} for a description of these elements in terms of ternary diagram trees. 
The positive elements of $F_3$ are those whose bottom tree can be chosen of  the form 
\[
\begin{tikzpicture}[x=.7cm, y=.7cm,
    every edge/.style={
        draw,
      postaction={decorate,
                    decoration={markings}
                   }
        }
]
\node (bbb) at (-2,-2) {$\scalebox{1}{$T_-'=$}$}; 

\draw[thick] (0,0)--(4,-4)--(8,0);
\draw[thick] (4.5,-3.5)--(1,0);
\draw[thick] (5,-3)--(2,0);
\draw[thick] (7,-1)--(6,0);
\draw[thick] (7.5,-.5)--(7,0);
\draw[thick] (4,-4)--(4,-4.5);

\draw[thick] (0.5,0)--(4,-4);
\draw[thick] (1.5,0)--(4.5,-3.5);
\draw[thick] (2.5,0)--(5,-3);
\draw[thick] (7,-1)--(6.5,0);
\draw[thick] (7.5,-.5)--(7.5,0);

\node (aaaa) at (5,-1) {$\scalebox{1}{$\ldots$}$}; 

\end{tikzpicture}
\]
The monoid consisting of positive elements in $F_3$ is denoted by $F_{3,+}$.
\end{definition}
\begin{figure}
\[
\begin{tikzpicture}[x=.35cm, y=.35cm,
    every edge/.style={
        draw,
      postaction={decorate,
                    decoration={markings}
                   }
        }
]

\node at (-1.5,0) {$\scalebox{1}{$y_0=$}$};
\node at (-1.25,-3) {\;};

\draw[thick] (0,0) -- (2,2)--(4,0)--(2,-2)--(0,0);
\draw[thick] (1,1) -- (1,0)--(2,-2);
\draw[thick] (1,1) -- (2,0)--(3,-1);
\draw[thick] (2,2) -- (3,0)--(3,-1);

 \draw[thick] (2,2)--(2,2.5);
 \draw[thick] (2,-2)--(2,-2.5);

\end{tikzpicture}
\;\;
\begin{tikzpicture}[x=.35cm, y=.35cm,
    every edge/.style={
        draw,
      postaction={decorate,
                    decoration={markings}
                   }
        }
]

\node at (-1.5,0) {$\scalebox{1}{$y_1=$}$};
\node at (-1.25,-3.25) {\;};


\draw[thick] (0,0) -- (2,2)--(4,0)--(2,-2)--(0,0);
 \draw[thick] (1,0)--(2,-2);
\draw[thick] (3,0)--(2,1) -- (1,0); 
\draw[thick] (2,0)--(3,-1);
\draw[thick] (2,2) -- (2,0);
\draw[thick] (3,0)--(3,-1);


 \draw[thick] (2,2)--(2,2.5);
 \draw[thick] (2,-2)--(2,-2.5);
\end{tikzpicture}
\;\;
\begin{tikzpicture}[x=.35cm, y=.35cm,
    every edge/.style={
        draw,
      postaction={decorate,
                    decoration={markings}
                   }
        }
]

\node at (-3.5,0) {$\scalebox{1}{$y_2=$}$};
\node at (-1.25,-3.25) {\;};

\draw[thick] (2,2)--(1,3)--(-2,0)--(1,-3)--(2,-2);

\draw[thick] (0,0) -- (2,2)--(4,0)--(2,-2)--(0,0);
 \draw[thick] (1,1) -- (2,0)--(3,-1);

\draw[thick] (0,0) -- (2,2)--(4,0)--(2,-2)--(0,0);
\draw[thick] (1,1) -- (1,0)--(2,-2);
\draw[thick] (1,1) -- (2,0)--(3,-1);
\draw[thick] (2,2) -- (3,0)--(3,-1);

\draw[thick] (1,3) -- (-1,0)--(1,-3);

\node at (6,0) {$\ldots$};

 \draw[thick] (1,3)--(1,3.5);
 \draw[thick] (1,-3)--(1,-3.5);

\end{tikzpicture}
\]
\caption{The generators of $F_3$.}\label{generatorsF3}
\end{figure}
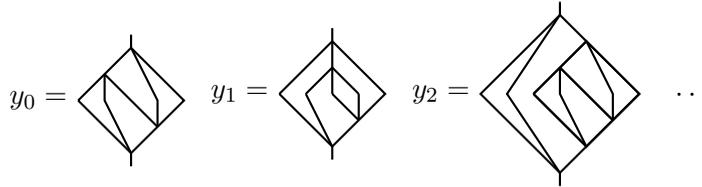

Since the bottom tree of a positive element (either in $F$ and $F_3$) is always of the same form, in the sequel we will only draw the top tree.

In \cite{GS} it was proved that $\vec{F}$ is isomorphic with the Brown-Thompson group $F_3$. 
Later a graphical interpretation of this isomorphism was provided  by Ren in \cite{Ren}: in every ternary tree, the $4$-valent vertices are replaced by a suitable binary tree with $3$ leaves (see Figure \ref{fig-ren-map}). We will use this isomorphism in the next section to study the positive oriented Thompson knots.
Note that the ternary trees of type $T_-'$ (which were introduced in Definition \ref{defF3}) are mapped to the binary trees of type $T_-$ defined in \eqref{Tmeno}. Therefore, by its very definition $\alpha(F_{3,+})$ is contained in $F_+\cap \vec{F}$.

Here follow some examples of positive oriented Thompson knots: (up to disjoint union with unknots) the trefoil, $(3+2n)_2$ twist knot, the $7_4$ knot, the granny knot,  the oriented boundary of an $n$-times twisted annulus and the Pretzel knot $P(3,3,3,3,-2)$.
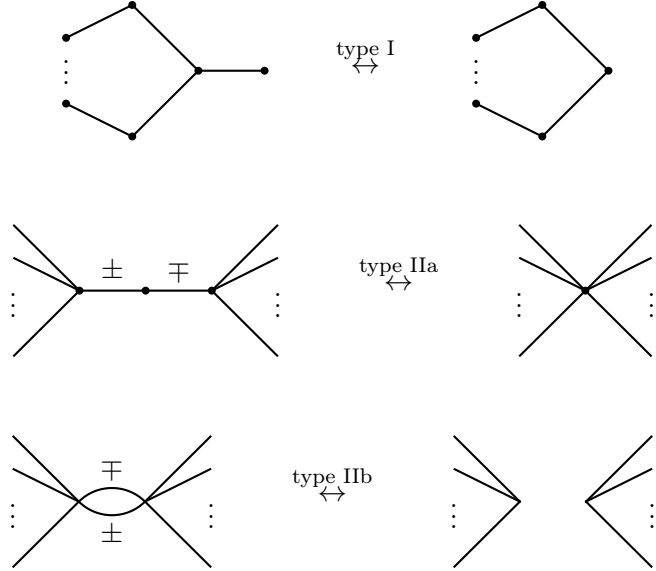
\begin{figure}
\phantom{This text will be invisible} 
\[

\]
\end{example}
\begin{remark}
The pretzel knot of type $(3,3,3,3,-2)$ is non-alternating, since its minimal diagram is non-alternating.
A general result on the minimality of canonical pretzel link diagrams can be found in \cite{LT}.  
There exist simpler examples of non-alternating positive pretzel knots, for example $P(3,3,-2)$ and $P(5,3,-2)$, also known as torus knots $T(3,4)$ and $T(3,5)$, but we were not able to realise these 
as positive oriented Thompson links.  
The previous example was discovered after  Yuanyuan Bao pointed out to us a mistake in the first version of this paper, where we claimed that  the links corresponding to the elements of $\vec{F}_+$ were alternating.
\end{remark}

\begin{figure}
\[\begin{tikzpicture}[x=1.75cm, y=1.75cm,
    every edge/.style={
        draw,
      postaction={decorate,
                    decoration={markings}
                   }
        }
]

\node at (0,0.05) {$
\vdots$}; 

\draw[thick, thick] (0,0.25)--(.5,.5)--(1,0)--(.5,-.5)--(0,-0.25);
\draw[thick, thick] (1,0)--(1.5,0);
\fill (0,0.25)  circle[radius=1.5pt];
\fill (0,-0.25)  circle[radius=1.5pt];
\fill (.5,.5)  circle[radius=1.5pt];
\fill (1,0)  circle[radius=1.5pt];
\fill (.5,-.5)  circle[radius=1.5pt];
\fill (1.5,0)  circle[radius=1.5pt];

\end{tikzpicture}
\qquad 
\begin{tikzpicture}[x=2cm, y=2cm,
    every edge/.style={
        draw,
      postaction={decorate,
                    decoration={markings}
                   }
        }
]

\node at (0,.5) {
$\stackrel{\textrm{type I}}{\leftrightarrow}$}; 

\node at (0,0) {$\;$};

\end{tikzpicture}
\qquad
\begin{tikzpicture}[x=1.75cm, y=1.75cm,
    every edge/.style={
        draw,
      postaction={decorate,
                    decoration={markings}
                   }
        }
]

\node at (0,0.05) {
$\vdots$}; 

\draw[thick, thick] (0,0.25)--(.5,.5)--(1,0)--(.5,-.5)--(0,-0.25);
\fill (0,0.25)  circle[radius=1.5pt];
\fill (0,-0.25)  circle[radius=1.5pt];
\fill (.5,.5)  circle[radius=1.5pt];
\fill (1,0)  circle[radius=1.5pt];
\fill (.5,-.5)  circle[radius=1.5pt];

\end{tikzpicture}\]
\newline
\[
\begin{tikzpicture}[x=1.75cm, y=1.75cm,
    every edge/.style={
        draw,
      postaction={decorate,
                    decoration={markings}
                   }
        }
]

\node at (-0.5,-0.05) {
$\vdots$}; 
\node at (1.5,-0.05) {
$\vdots$}; 

\node at (.25,.15) {
$\pm$}; 
\node at (.75,.15) {
$\mp$}; 

\draw[thick] (0,0)--(-.5,.5);
\draw[thick] (0,0)--(-.5,-.5);
\draw[thick] (0,0)--(-.5,0.25);

\draw[thick] (0,0)--(.5,0);
\draw[thick] (.5,0)--(1,0);
\draw[thick] (1.5,0.25)--(1,0);
\draw[thick] (1.5,.5)--(1,0);
\draw[thick] (1.5,-.5)--(1,0);
\fill (0,0)  circle[radius=1.5pt];
\fill (.5,0)  circle[radius=1.5pt];
\fill (1,0)  circle[radius=1.5pt];

\end{tikzpicture}
\qquad 
\begin{tikzpicture}[x=2cm, y=2cm,
    every edge/.style={
        draw,
      postaction={decorate,
                    decoration={markings}
                   }
        }
]

\node at (0,.5) {
$\stackrel{\textrm{type IIa}}{\leftrightarrow}$}; 

\node at (0,0) {$\;$};

\end{tikzpicture}
\qquad
\begin{tikzpicture}[x=1.75cm, y=1.75cm,
    every edge/.style={
        draw,
      postaction={decorate,
                    decoration={markings}
                   }
        }
]

\node at (-0.5,-0.05) {
$\vdots$};
\node at (0.5,-0.05) {
$\vdots$}; 

\draw[thick] (0,0)--(-.5,.5);
\draw[thick] (0,0)--(-.5,-.5);
\draw[thick] (0,0)--(-.5,0.25);

\draw[thick] (.5,0.25)--(0,0);
\draw[thick] (.5,.5)--(0,0);
\draw[thick] (.5,-.5)--(0,0);
\fill (0,0)  circle[radius=1.5pt];

\end{tikzpicture}
\]
\newline
\[
\begin{tikzpicture}[x=1.75cm, y=1.75cm,
    every edge/.style={
        draw,
      postaction={decorate,
                    decoration={markings}
                   }
        }
]

\node at (.25,-.25) {
$\pm$}; 
\node at (.25,.25) {
$\mp$}; 

\draw[thick] (0,0)--(-.5,.5);
\draw[thick] (0,0)--(-.5,-.5);
\draw[thick] (0,0)--(-.5,0.25);

\draw[thick] (.5,0)--(1,0.25);
\draw[thick] (1,.5)--(.5,0);
\draw[thick] (1,-.5)--(.5,0);
\fill (0,0)  circle[radius=.5pt];
\fill (.5,0)  circle[radius=.5pt];

\draw[thick] (0,0) to[out=-45,in=-135] (.5,0);
\draw[thick] (0,0) to[out=45,in=135] (.5,0);

\node at (-0.5,-0.05) {
$\vdots$}; 
\node at (1,-0.05) {
$\vdots$}; 

\end{tikzpicture}
\qquad 
\begin{tikzpicture}[x=2cm, y=2cm,
    every edge/.style={
        draw,
      postaction={decorate,
                    decoration={markings}
                   }
        }
]

\node at (0,.5) {
$\stackrel{\textrm{type IIb}}{\leftrightarrow}$}; 

\node at (0,0) {$\;$};

\end{tikzpicture}
\qquad
\begin{tikzpicture}[x=1.75cm, y=1.75cm,
    every edge/.style={
        draw,
      postaction={decorate,
                    decoration={markings}
                   }
        }
]

\node at (-0.5,-0.05) {
$\vdots$}; 
\node at (1,-0.05) {
$\vdots$}; 

\draw[thick] (0,0)--(-.5,.5);
\draw[thick] (0,0)--(-.5,-.5);
\draw[thick] (0,0)--(-.5,0.25);

\draw[thick] (.5,0)--(1,0.25);
\draw[thick] (1,.5)--(.5,0);
\draw[thick] (1,-.5)--(.5,0);
\fill (0,0)  circle[radius=.5pt];
\fill (.5,0)  circle[radius=.5pt];


\end{tikzpicture}
\]
\caption{
A Reidemeister move of type I allows to add (or remove) a 1-valent vertex and its edge. 
When there is a 2-valent vertex whose edges have opposite signs, they may be contracted as shown in the move of type IIa.
Two parallel edges with  opposite signs may be added (or removed) by means of a move of type IIb. 
}
\label{Reide}
\end{figure}
In Figure \ref{Reide} we display the Reidemeister moves of type I and II in the language of Tait diagrams. 
These moves do not affect the link corresponding to the Tait diagram, see e.g. \cite[Chapter X]{Reide1} and \cite[Figure 3]{Reide2}. 
They will come in handy in the next section.

We conclude the preliminaries with a lemma concerning the structure of the top tree of elements in $\vec{F}_+$. It is a consequence of the proof of \cite[Theorem 5.5]{Ren} and for this reason we only give a sketch of its proof.
\begin{lemma}\label{F3pos}
With the notations of the previous section, it holds that $ \vec{F}_+=\alpha(F_3)\cap \vec{F}_+=\alpha(F_3)\cap F_+=\alpha(F_{3,+})$.
\end{lemma}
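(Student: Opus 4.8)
The plan is to establish the two inclusions in $\alpha(F_3)\cap\vec{F}_+=\alpha(F_{3,+})$ separately. Since $\alpha$ is an isomorphism onto $\vec{F}$ we have $\alpha(F_3)=\vec{F}$, so the asserted identity is really $\vec{F}_+=\alpha(F_{3,+})$. The inclusion $\alpha(F_{3,+})\subseteq\vec{F}_+$ was already noted above: a positive ternary element is represented by a pair $(S_+,T_-')$, and because $\alpha$ sends the ternary comb $T_-'$ to the binary comb $T_-$, its image is represented by $(\alpha(S_+),T_-)$, which lies in $F_+$ and, being in the image of $\alpha$, also in $\vec{F}$. Only the reverse inclusion requires work.

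First I would fix $h\in\vec{F}_+$ and, using that $\alpha$ is an isomorphism onto $\vec{F}$, write $h=\alpha(g)$ for a unique $g\in F_3$ represented by a \emph{reduced} pair of ternary trees $(S_+,S_-)$. The goal then becomes purely combinatorial: to show that $S_-$ is the ternary comb $T_-'$, for then $g\in F_{3,+}$ by definition. Here I would invoke the standard description of positivity in $F$: an element lies in $F_+$ exactly when the bottom tree of its reduced binary tree pair is a comb of type $T_-$. Since $\alpha$ acts caret by caret, replacing each ternary caret by the block of Figure \ref{fig-ren-map}, the element $h$ is represented, though not necessarily reducedly, by $(\alpha(S_+),\alpha(S_-))$. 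As $\alpha$ preserves the number and the left-to-right order of the leaves and is injective on trees, $\alpha(S_-)$ is the binary comb if and only if $S_-$ is the ternary comb; hence the claim would be immediate \emph{were} $(\alpha(S_+),\alpha(S_-))$ already reduced.

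The main obstacle is precisely that $(\alpha(S_+),\alpha(S_-))$ can fail to be reduced although $(S_+,S_-)$ is: cancellations may occur among the binary carets internal to the blocks. A short analysis of the block in Figure \ref{fig-ren-map} shows that its apex caret can never be exposed, since its right child is always an internal vertex, while its lower caret is exposed exactly when the middle and right children of the corresponding ternary caret are leaves. Two blocks, one in $\alpha(S_+)$ and one in $\alpha(S_-)$, can therefore present a common exposed caret at the same pair of leaves even when $(S_+,S_-)$ has no common ternary caret. The crux of the argument is to show that these cancellations cannot turn a non-comb $\alpha(S_-)$ into a comb: one follows the effect of each admissible cancellation on the bottom tree and checks, using the rigidity of the block, that the fully reduced bottom tree is of type $T_-$ only when $S_-$ was the ternary comb to begin with. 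This bookkeeping is carried out in the proof of \cite[Theorem 5.5]{Ren}, which is why we content ourselves with a sketch. Granting it, $S_-=T_-'$, so $g\in F_{3,+}$ and $h\in\alpha(F_{3,+})$, giving the reverse inclusion and hence the lemma.
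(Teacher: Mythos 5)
Your strategy is genuinely different from the paper's. The paper argues by induction on the number of leaves: it locates a copy of the three-leaf block of Figure \ref{fig-ren-map} among the leaves of the top tree, and either cancels it against the bottom comb (when it sits at the rightmost leaves) or strips it off by multiplying on the right by $\alpha(x_ix_{i+1})^{-1}$, producing an element that is still in $\vec{F}_+$ and has fewer leaves, to which the induction hypothesis applies. You instead compare the reduced ternary representative $(S_+,S_-)$ of $g$ with the reduced binary representative of $\alpha(g)$ and aim to show directly that $S_-$ must be the ternary comb. Your setup is sound: $\alpha(F_3)=\vec F$ so the identity reduces to $\vec F_+=\alpha(F_{3,+})$; positivity in $F$ is detected by the bottom tree of the reduced pair being a comb; $\alpha$ sends ternary combs to binary combs and is injective on trees; and your analysis of which carets of a block can be exposed (never the apex, the lower one exactly when the middle and right ternary children are leaves) is accurate.

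However, the decisive step of your argument --- that the partial cancellations available in $(\alpha(S_+),\alpha(S_-))$ can never reduce a non-comb $\alpha(S_-)$ to a binary comb --- is essentially the entire content of the lemma, and you do not prove it: you assert that the required ``bookkeeping'' is carried out in the proof of \cite[Theorem 5.5]{Ren}. That result concerns the isomorphism $F_3\cong\vec F$ and the decomposition of elements of $\vec{F}$ into positive parts, and nothing in your sketch identifies a statement there that yields precisely the implication you need. The difficulty you would have to overcome is real: once the lower caret of a block is cancelled, its apex may become exposed and cancel in turn, so cancellations can cascade in ways that have no ternary counterpart, and excluding the possibility that such cascades flatten a non-comb bottom tree into a comb requires an actual induction or invariant, not just the observation that the apex is initially protected. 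As written, your text is a plan for a proof rather than a proof; to complete it you would need to supply that combinatorial argument, or else fall back on an explicit reduction scheme such as the paper's peeling-off of $\alpha(x_ix_{i+1})$ factors.
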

\begin{proof}
The inclusion  $\alpha(F_{3,+})\subset \alpha(F_3)\cap \vec{F}_+$ is obvious. 
The converse inclusion can be proved by induction on the number of leaves in the trees and by
showing that  the top tree $T_+$ always contains the following subtree   (the leaves of this subtree are a subset of the leaves of $T_+$)
\[\begin{tikzpicture}[x=.75cm, y=.75cm,
    every edge/.style={
        draw,
      postaction={decorate,
                    decoration={markings}
                   }
        }
]

\draw[thick] (0.5,0.75)--(.5,.5);
\draw[thick] (0.5,0)--(.75,.25);
\draw[thick] (0,0)--(.5,.5)--(1,0);
\end{tikzpicture}
\]
If the leaves of the above tree are the rightmost leaves of $T_+$, then by cancelling two pairs of opposing carets we are done.
Otherwise multiply $g\in \vec{F}_+$ by $(x_ix_{i+1})^{-1}$ (where $i$ is a suitable non-negative integer). 
Note that $x_ix_{i+1}=\alpha(y_i)$ is in $\alpha(F_{3,+})$ and $\vec{F}$, \cite{GS}.
The element $g':=g (x_ix_{i+1})^{-1}$ has less leaves than $g$ and is still in $\alpha(F_3)\cap \vec{F}_+$.
Therefore, by induction it is in $\alpha(F_{3,+})$.
Since $g'=g (x_ix_{i+1})^{-1}\in \alpha(F_{3,+})$ and $x_ix_{i+1}\in \alpha(F_{3,+})$, it follows that $g\in \alpha(F_{3,+})$ as well.
\end{proof}

\section{Positivity of $\vec\CL(\vec{F}_+)$}

In order to prove Theorem~1, we need to transform the oriented link diagrams produced by using the elements of $\vec{F}_+$ into positive ones 
(i.e. we have to remove all the crossings that are negative in the sense of Figure \ref{figpos}). Let $g \in \vec{F}_+$ be represented by a pair of binary trees $(T_+,T_-)$, where $T_-$ is the standard bottom tree defined in \eqref{Tmeno}. By construction, the link diagram $\vec{\CL}(g)$ is a union of two tangles $A$ and $B$, situated above and below the $x$-axis, respectively. Both tangles are alternating\footnote{A tangle is said to be alternating if the crossings alternate from under to over as one moves along any component or arc of the tangle.} and have the same number of crossings $n-1$, where~$n$ is the number of leaves of the two trees $T_+, T_-$. Moreover, by Lemma \ref{lemmasignoriented} all the crossings of~$A$ are positive, while all the crossings of~$B$ are negative\footnote{Here positive and negative should be understood as crossings of an oriented link, as explained in  Figure \ref{figpos}.}, since the Tait graph $\Gamma(T_+,T_-)$ is bipartite. Therefore, in order to obtain a positive diagram, we need to remove all the crossings of the bottom tangle~$B$. 
If we use the second method for constructing links described in Section \ref{secdue}, we see that
there is one negative crossing attached to every string coming out of a leaf of $T_+$, except for the leaf at the very right of the tree, compare Figure \ref{knot5_2}.
We will remove all these negative crossings simultaneously, using two types of local moves. 
By Lemma \ref{F3pos} the binary tree diagram $(T_+,T_-)$ is the image (under the map $\alpha$) of a ternary tree diagram $(T_+',T_-')$, where $T_-'$ is a ternary tree of the form depicted in Definition \ref{defF3}.
The crossings of~$B$ attached to left leaves of the tree $T_+'$ can be dealt with the move described in Figure \ref{leftreduction}, where by means of a Reidemeister move of type II
two crossings (one positive in the upper-half plane
 and one negative in the lower-half plane)
  are removed.
  Similarly, for crossings of ~$B$ attached to middle leaves of the tree $T_+'$, by using a Reidemeister move of type II we remove a positive and a negative crossing (see Figure  \ref{middlereduction}).
In each of these figures 
we depict
a portion of ternary tree diagram, 
a portion of binary tree diagram,  and two tangles.
The small boxes in these figure should be interpreted (respectively) as
portions ternary tree diagram, 
a portion of binary tree diagram,  and two tangles.
The tangles are those produced according to the second procedure described in the previous section.

\begin{figure}[htb] 
\begin{center}
\raisebox{-0mm}{\includegraphics[scale=0.8]{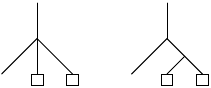}}

\bigskip
\bigskip
\raisebox{-0mm}{\includegraphics[scale=0.8]{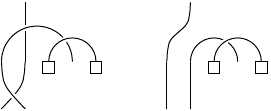}}
\caption{Left leaf.}\label{leftreduction}
\end{center}
\end{figure}

\begin{figure}[htb] 
\begin{center}
\raisebox{-0mm}{\includegraphics[scale=0.8]{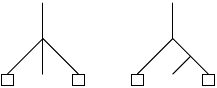}}

\bigskip
\bigskip
\raisebox{-0mm}{\includegraphics[scale=0.8]{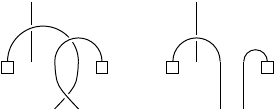}}
\caption{Middle leaf.}\label{middlereduction}
\end{center}
\end{figure}

The crossings of~$B$ attached to right vertices of the tree $T_+'$ can be removed by a detour move, as shown in Figure \ref{rightreduction}. 
This detour move replaces two crossings (one negative and one positive) with a single new positive crossing. 
The precise effect of this move is removing a negative crossing, and replacing one of the crossings of~$A$ by another positive crossing. This can be seen by analysing the auxiliary orientations in the figure (in fact, there are two possibilities for the local orientations; however, the actual choice has no effect on the signs of the crossings). We record two important features about these local moves:
\begin{enumerate}
\item 
they can be performed independently (since they take place in disjoint regions of the diagram),
\item they preserve the positivity of the upper tangle~$A$ and decrease the number of negative crossings in the lower tangle.
\end{enumerate}
In particular, we end up with a positive diagram for the link $\vec{\CL}(g)$. 
This concludes the proof of Theorem 1.

The reader is invited to apply the above procedure to the top diagram of Figure \ref{knot5_2}. The resulting positive diagram is drawn at the bottom of the same figure, with the trivial components removed. 
\begin{remark}
The number of crossings of the final diagram is bounded above by the number of right leaves of the upper tree $T_+'$. 
\end{remark}

\begin{figure}[htb] 
\begin{center}
\raisebox{-0mm}{\includegraphics[scale=0.8]{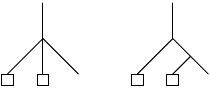}}

\bigskip
\bigskip
\raisebox{-0mm}{\includegraphics[scale=0.8]{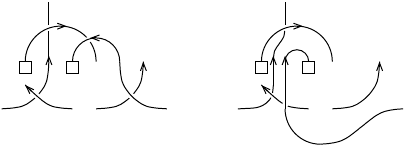}}
\caption{Right leaf.} \label{rightreduction}
\end{center}
\end{figure}


\section{Unknotting positive oriented Thompson links}
The unknotting number of a knot 
is the smallest number of crossing changes
(over all possible diagrams)
  required to transform the knot into 
   the unknot, see e.g. \cite{Licko}.
The next theorem provides an upper bound for the unknotting number of $\vec{\CL}(g)$ when $g$ is in $\vec{F}_+$.
We will use $4$-valent plane trees in order to describe elements of $\vec{F}_+$, as in the proof of Theorem 1.
As we will see, these trees can be reduced to an empty tree by using a set of seven moves as shown in Figure \ref{moves}. 
\begin{theorem}
For any $g\in\vec{F}_+$, the unknotting number is at most equal to the number of applications of  sixth move depicted in Figure \ref{moves}.
\end{theorem}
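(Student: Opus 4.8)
The plan is to exploit the reduction system on $4$-valent plane trees recorded in Figure~\ref{moves}. First I would verify that these seven moves are \emph{complete}, in the sense that, starting from the $4$-valent plane tree $T$ that encodes $g$ (via the isomorphism $\alpha$ and Lemma~1), a finite sequence of moves reduces $T$ to the empty tree. This is an induction on the number of leaves of $T$: one checks that every nonempty tree of the relevant form admits an applicable move that strictly decreases its size, so the process terminates with the empty tree. The structural description of positive trees from Lemma~1 guarantees that the configurations needed to apply the moves are always present.

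The heart of the argument is to interpret each of the seven moves as an operation on the associated link diagram $\vec{\CL}(T)$. I expect that six of the moves --- all but the $6$-move --- are realized by ambient isotopies of the diagram, possibly combined with the deletion of split unknotted components. Here the Reidemeister moves of Figure~\ref{Reide} (types I, IIa, IIb), together with the local reductions already used in the proof of Theorem~1 (Figures~\ref{leftreduction}, \ref{middlereduction}, and the detour move of Figure~\ref{rightreduction}), supply the required local pictures. None of these six moves alters the ambient isotopy type of the link apart from discarding trivial split components, and in particular none of them requires a crossing change. By contrast, the $6$-move is realized by a single crossing change: locally it switches one crossing so that the two strands passing through it can be separated, which on the tree level produces exactly the displayed simplification.

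Granting this local analysis, the theorem follows at once. Running the complete reduction on $T$ yields a sequence of diagrams terminating in the diagram of the empty tree, namely a split union of unknots, i.e.\ an unlink. Along the way each move other than the $6$-move is an isotopy or a split-unknot removal, while each $6$-move costs exactly one crossing change. Since adding or deleting split unknotted components leaves the unknotting number unchanged, the total number of crossing changes needed to transform $\vec{\CL}(g)$ into the unlink is at most the number of $6$-moves applied. Hence the unknotting number of $\vec{\CL}(g)$ is bounded above by this count, as claimed.

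The main obstacle I anticipate is the local verification underlying the second paragraph: for each of the seven moves one must draw the precise modification of $\vec{\CL}(T)$, keep track of the orientations and hence the signs of the crossings involved --- as already illustrated by the auxiliary orientations in the detour move of Figure~\ref{rightreduction} --- and confirm that exactly the $6$-move, and no other, genuinely changes the knot type and is compensated by a single crossing change. The delicate point is to certify that the remaining six moves are honest isotopies rather than concealing a crossing change inside a seemingly trivial component, and the bookkeeping of split unknotted components must be carried out carefully so that the unknotting count is not inflated.
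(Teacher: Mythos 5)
Your proposal follows essentially the same route as the paper: induction on the number of leaves via Lemma \ref{F3pos}, reduction of the $4$-valent tree using the seven moves of Figure \ref{moves}, with cases $1$--$5$ and $7$ realized by Reidemeister moves of type I and II plus removal of split unknots, and the $6$-move alone accounted for by a single crossing change. The local case-by-case verification you defer as the "main obstacle" is exactly the content of the paper's proof (the explicit Tait-diagram computations for each of the seven cases, including the two colour subcases of case 6), so your plan is correct and matches the published argument.
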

\begin{proof}
Let $(T_+,T_-)$ be a pair of ternary trees in $F_{3,+}$.
We give a proof by induction on the number $n$ of leaves, 
using
 Lemma \ref{F3pos}. 
When $n=3$, the element $(T_+,T_-)$ is the trivial element of $F_3$ and the corresponding link is trivial.
Now there are seven cases to deal with. Indeed, an easy inductive argument shows that $T_+$ contains one of the subtrees depicted in Figure \ref{moves} (the leaves of this tree are a subset of the leaves of $T_+$).
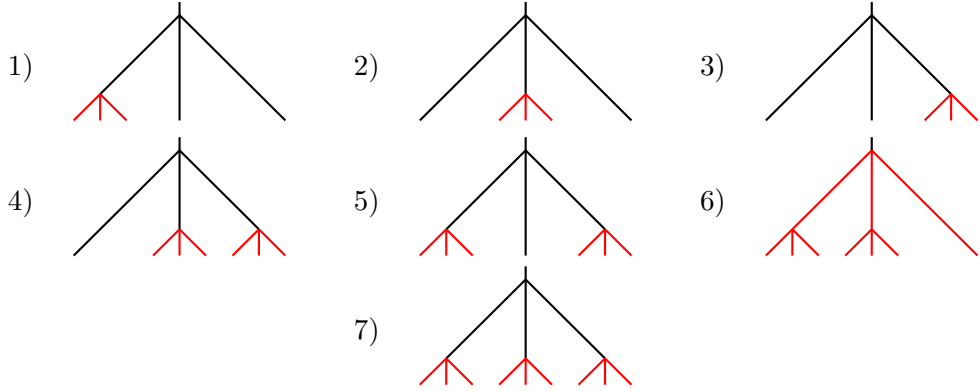
\begin{figure}
\[
\begin{tikzpicture}[x=.35cm, y=.35cm,
    every edge/.style={
        draw,
      postaction={decorate,
                    decoration={markings}
                   }
        }
]
\node (bbb) at (-2,2) {$\scalebox{1}{$1)$}$}; 

\draw[thick] (1,1)--(4,4)--(8,0);
\draw[thick, red] (0,0)--(1,1);
\draw[thick] (4,4.5)--(4,4);
\draw[thick, red] (2,0)--(1,1);

\draw[thick, red] (1,0)--(1,1);
\draw[thick] (4,0)--(4,4);

\end{tikzpicture}
\qquad
\begin{tikzpicture}[x=.35cm, y=.35cm,
    every edge/.style={
        draw,
      postaction={decorate,
                    decoration={markings}
                   }
        }
]
\node (bbb) at (-2,2) {$\scalebox{1}{$2)$}$}; 

\draw[thick] (0,0)--(4,4)--(8,0);
\draw[thick] (4,4.5)--(4,4);


\draw[thick, red] (4,0)--(4,1);
\draw[thick] (4,1)--(4,4);
\draw[thick, red] (3,0)--(4,1)--(5,0);

\end{tikzpicture}
\qquad
\begin{tikzpicture}[x=.35cm, y=.35cm,
    every edge/.style={
        draw,
      postaction={decorate,
                    decoration={markings}
                   }
        }
]
\node (bbb) at (-2,2) {$\scalebox{1}{$3)$}$}; 

\draw[thick] (0,0)--(4,4)--(7,1);
\draw[thick, red] (7,1)--(8,0);
\draw[thick] (4,4.5)--(4,4);
\draw[thick, red] (6,0)--(7,1);

\draw[thick, red] (7,1)--(7,0);
\draw[thick] (4,0)--(4,4);

\end{tikzpicture}
\]
\[
\begin{tikzpicture}[x=.35cm, y=.35cm,
    every edge/.style={
        draw,
      postaction={decorate,
                    decoration={markings}
                   }
        }
]
\node (bbb) at (-2,2) {$\scalebox{1}{$4)$}$}; 

\draw[thick] (0,0)--(4,4)--(7,1);
\draw[thick, red] (7,1)--(8,0);
\draw[thick] (4,4.5)--(4,4);
\draw[thick, red] (6,0)--(7,1);

\draw[thick, red] (7,1)--(7,0);
\draw[thick, red] (4,0)--(4,1);
\draw[thick] (4,1)--(4,4);
\draw[thick, red] (3,0)--(4,1)--(5,0);

%

\end{tikzpicture}
\qquad
\begin{tikzpicture}[x=.35cm, y=.35cm,
    every edge/.style={
        draw,
      postaction={decorate,
                    decoration={markings}
                   }
        }
]
\node (bbb) at (-2,2) {$\scalebox{1}{$5)$}$}; 

\draw[thick] (1,1)--(4,4)--(7,1);
\draw[thick, red] (0,0)--(1,1);
\draw[thick, red] (7,1)--(8,0);
\draw[thick] (4,4.5)--(4,4);
\draw[thick, red] (2,0)--(1,1);
\draw[thick, red] (6,0)--(7,1);

\draw[thick, red] (1,0)--(1,1);
\draw[thick, red] (7,1)--(7,0);
\draw[thick] (4,0)--(4,4);

%

\end{tikzpicture}
\qquad
\begin{tikzpicture}[x=.35cm, y=.35cm,
    every edge/.style={
        draw,
      postaction={decorate,
                    decoration={markings}
                   }
        }
]
\node (bbb) at (-2,2) {$\scalebox{1}{$6)$}$}; 

\draw[thick, red] (0,0)--(4,4)--(8,0);
\draw[thick] (4,4.5)--(4,4);
\draw[thick, red] (2,0)--(1,1);

\draw[thick, red] (1,0)--(1,1);
\draw[thick, red] (4,0)--(4,4);
\draw[thick, red] (3,0)--(4,1)--(5,0);

%

\end{tikzpicture}
\]
\[
\begin{tikzpicture}[x=.35cm, y=.35cm,
    every edge/.style={
        draw,
      postaction={decorate,
                    decoration={markings}
                   }
        }
]
\node (bbb) at (-2,2) {$\scalebox{1}{$7)$}$}; 

\draw[thick, red] (0,0)--(1,1);
\draw[thick, red] (7,1)--(8,0);
\draw[thick] (1,1)--(4,4)--(7,1);
\draw[thick] (4,4.5)--(4,4);
\draw[thick, red] (2,0)--(1,1);
\draw[thick, red] (6,0)--(7,1);

\draw[thick, red] (1,0)--(1,1);
\draw[thick, red] (7,1)--(7,0);
\draw[thick, red] (4,0)--(4,1);
\draw[thick] (4,1)--(4,4);
\draw[thick, red] (3,0)--(4,1)--(5,0);

%

\end{tikzpicture}
\]
\caption{Seven reduction moves for the elements in $F_{3,+}=\alpha^{-1}(\vec{F}_+)$. 
The red part is the one to be removed.
All these moves except the sixth, do not affect the corresponding knot (only some  unknots are lost in the application of these moves).
In the sixth move a positive crossing is turned into a negative one.}\label{moves}
\end{figure}
The edges in red in the  figure will be erased.\\
As we shall see, in all the cases, but case 6), we simply apply Reidemeister moves of type I and II, and (possibly) remove unknots. 
In this proof we only make use of the first method for constructing links (see Section \ref{secdue}). In particular, all the Reidemeister moves are described in the framework of
Tait diagrams.
In these cases we often erase pairs of parallel edges. If the sources and targets of these edges both have degree greater than two, 
then we are performing a Reidemeister move of type II. Otherwise, we are removing an unknot. Clearly, removing unknots does not change the unknotting number and, thus, we
are always allowed to erase pairs of parallel edges.

Only in case 6) we need to turn a positive crossing into negative one.

Here follows the subtree of case 1) transformed under the map $\alpha$, the corresponding Tait diagram, an "equivalent" Tait diagram along with the corresponding binary and ternary trees.
 
\[

\]
\end{proof}
\begin{remark}
The above bound is optimal. Indeed,
let $g$ be $x_0^2x_1x_3^2x_4$, then $\vec \CL(g)$ is the Hopf link (up to disjoint union with unknots). The link $\vec \CL(g\varphi^7(g))$ is a chain link with $3$ connected components.
More generally, $\vec \CL(\prod_{i=0}^{n-1}\varphi^{7i}(g))$ is a chain link with $n$ connected components, see Figure \ref{treechain} (here we are using the notation $\prod_{i=1}^n g_i:=g_1\cdots g_n$).
It is easy to see that the unknotting number and the number of 6-moves needed are both equal to $n$.
\begin{figure}[h]
\[
\begin{tikzpicture}[x=.35cm, y=.35cm,
    every edge/.style={
        draw,
      postaction={decorate,
                    decoration={markings}
                   }
        }
]

\draw[thick] (0,0)--(4,4)--(8,0);
\draw[thick] (9,0)--(5,5)--(4,4);
\draw[thick] (5,5)--(10,0);
\draw[thick] (5,5)--(5,5.5);
\draw[thick] (2,0)--(1,1);
 
\draw[thick] (1,0)--(1,1);
 \draw[thick] (4,1)--(4,4);
\draw[thick] (4,0)--(4,1);
\draw[thick] (3,0)--(4,1)--(5,0);

\node (a) at (-1.5,.5) {$\scalebox{1}{$T_1'=$}$}; 

\end{tikzpicture}
\qquad
\begin{tikzpicture}[x=.35cm, y=.35cm,
    every edge/.style={
        draw,
      postaction={decorate,
                    decoration={markings}
                   }
        }
]

\draw[thick] (9,0)--(13,4)--(17,0);
 \draw[thick] (12,0)--(13,1)--(14,0);
 \draw[thick] (10,0)--(10,1)--(11,0);

\draw[thick] (0,0)--(4,4)--(8,0);
\draw[thick] (4,4)--(9,9)--(4,4);
\draw[thick] (9,9)--(9,9.5);
\draw[thick] (9,9)--(18,0);
 \draw[thick] (9,9)--(13,4)--(13,0);
\draw[thick] (2,0)--(1,1);
 
\draw[thick] (1,0)--(1,1);
 \draw[thick] (4,1)--(4,4);
\draw[thick] (4,0)--(4,1);
\draw[thick] (3,0)--(4,1)--(5,0);

\node (a) at (-1.5,.5) {$\scalebox{1}{$T_2'=$}$}; 

\end{tikzpicture}
\]
\[
\begin{tikzpicture}[x=.35cm, y=.35cm,
    every edge/.style={
        draw,
      postaction={decorate,
                    decoration={markings}
                   }
        }
]

\draw[thick] (9,0)--(13,4)--(17,0);
 \draw[thick] (12,0)--(13,1)--(14,0);
 \draw[thick] (10,0)--(10,1)--(11,0);

\draw[thick] (9,9)--(20.5,2.5);
\draw[thick] (18,0)--(20.5,2.5)--(23,0)--(18,0);

\draw[thick] (0,0)--(4,4)--(8,0);
\draw[thick] (4,4)--(9,9)--(4,4);
\draw[thick] (9,9)--(9,9.5);
 \draw[thick] (9,9)--(13,4)--(13,0);
\draw[thick] (2,0)--(1,1);
 
\draw[thick] (1,0)--(1,1);
 \draw[thick] (4,1)--(4,4);
\draw[thick] (4,0)--(4,1);
\draw[thick] (3,0)--(4,1)--(5,0);

\node (a) at (-1.5,.5) {$\scalebox{1}{$T_n'=$}$}; 
\node (a) at (20.5,.75) {$\scalebox{1}{$T_{n-2}'$}$}; 

\end{tikzpicture}
\]
\[
%
%
%
%
%
\begin{tikzpicture}[x=.35cm, y=.35cm,
    every edge/.style={
        draw,
      postaction={decorate,
                    decoration={markings}
                   }
        }
]

\draw[thick] (0,0)--(4,4)--(8,0);
\draw[thick] (14,0)--(7,7)--(4,4);
\draw[thick] (14,0)--(9,0)--(11.5,2.5);
\draw[thick] (7,7)--(7,7.5);
\draw[thick] (2,0)--(1,1);
 
\draw[thick] (1,0)--(1.5,.5);
 \draw[thick] (4,1)--(5.5,2.5);
\draw[thick] (4,0)--(4.5,.5);
\draw[thick] (3,0)--(4,1)--(5,0);

\node (a) at (-1.5,.5) {$\scalebox{1}{$T_n=$}$}; 
\node (a) at (11.5,.75) {$\scalebox{1}{$T_{n-1}$}$}; 

\end{tikzpicture}
\]
\caption{
An element of $F_{3,+}$ defined by a recursive relation ($T_0'$ and $T_0$ are the empty trees)
and the corresponding image under the map $\alpha$.
The latter element of $\vec{F}_+$ is  $\prod_{i=0}^{n-1}\varphi^{7i}(g)$ 
and the corresponding link is a chain with $n$ connected components.}\label{treechain}
\end{figure}
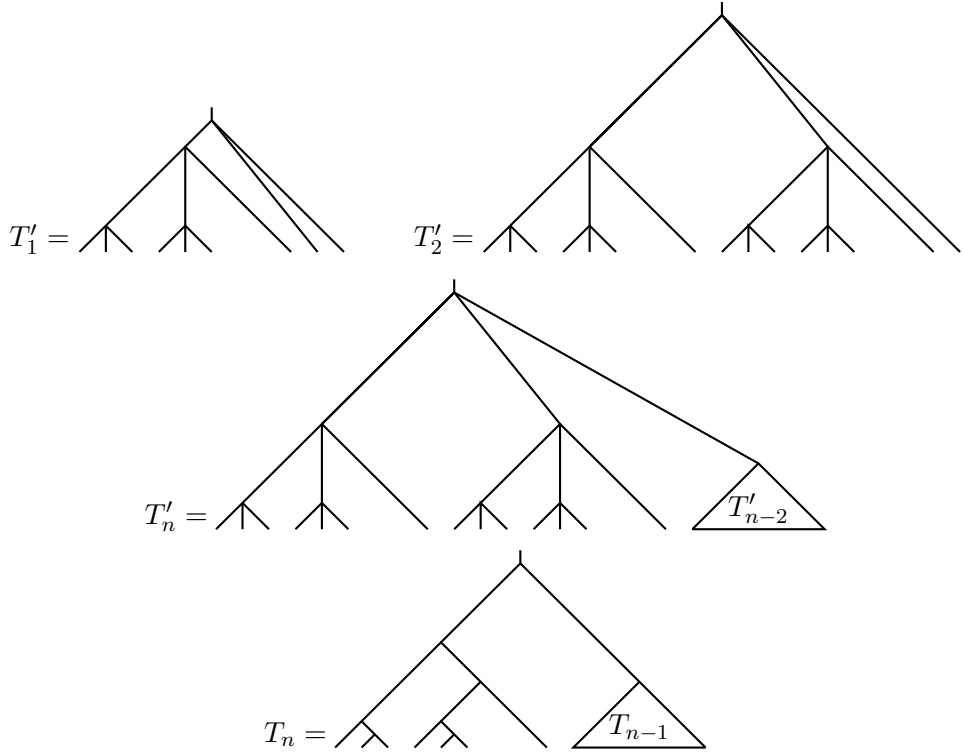
%
%
%
%
%
%
\end{remark}   

In the proof of Theorem 2 we saw that elements $g\in \vec{F}_+$
can be simplified by turning positive crossings into negative crossings.
We end this paper with the observation that it is also possible to reduce the "size" of $g$  by smoothing suitable positive crossings.

Each ternary tree diagram in $F_{3,+}$ can also be reduced to an empty tree by using the set of seven moves
1), 2), 3), 4), 5), 6)', 7) shown in 
Figures \ref{moves} and \ref{additional6move}.
\begin{theorem}
For any $g\in\vec{F}_+$, the unknotting number is at most equal to the number of applications of the 
move 
6)' 
 depicted in Figure
  \ref{additional6move}.
\end{theorem}
\begin{proof}
As before we do an inductive argument on the number of  leaves of an element $(T_+,T_-)$ in $F_{3,+}$. 
By the previous discussion, one of the $7$ cases depicted in Figure \ref{moves} occurs.
In all cases but 6)', we may reduce the number of  leaves 
with the only effect of losing some distant unknots.
As in the previous proof, there are two subcases depending on the color of the leftmost vertex. 
Suppose that the color is $+$, then we have the following Tait diagram and the link
\[
\begin{tikzpicture}[x=.35cm, y=.35cm,
    every edge/.style={
        draw,
      postaction={decorate,
                    decoration={markings}
                   }
        }
] 

\node (bbb) at (0,-.5) {$\scalebox{.5}{$+$}$}; 
\node (bbb) at (1,-.5) {$\scalebox{.5}{$-$}$}; 
\node (bbb) at (2,-.5) {$\scalebox{.5}{$+$}$}; 
\node (bbb) at (3,-.5) {$\scalebox{.5}{$-$}$}; 

\fill (0,0)  circle[radius=1.5pt];
\fill (1,0)  circle[radius=1.5pt];
\fill (2,0)  circle[radius=1.5pt];
\fill (3,0)  circle[radius=1.5pt];

\draw[thick] (0,0) to[out=90,in=90] (1,0);
\draw[thick] (1,0) to[out=90,in=90] (2,0);
\draw[thick] (2,0) to[out=-90,in=-90] (3,0);
 
\draw[thick] (0,0) -- (-.5,.5);
\draw[thick] (0,0) -- (-.5,-.5);

\draw[thick] (3,0) -- (2.5,.5);

\draw[thick] (3,0) -- (3.5,.5);
\draw[thick] (3,0) -- (3.5,-.5);

\end{tikzpicture}
\qquad 
\begin{tikzpicture}[x=.3cm, y=.3cm, every path/.style={
 thick}, 
every node/.style={transform shape, knot crossing, inner sep=1.5pt}]


\node (a1) at (0,0) {};
\node (a2) at (2,2) {};

\node (b3) at (0,2) {};
\node (b2) at (1,1) {};
\node (b1) at (2,0) {};

\draw[->] (a1.center) .. controls (a1.4 north east) and (a2.4 south west) .. (a2.center);
\draw (b1.center) .. controls (b1.4 north west) and (b2.4 south east) .. (b2);
\draw[->] (b2) .. controls (b2.4 north west) and (b3.4 south east) .. (b3.center);


\node (a3) at (4,0) {};
\node (a4) at (6,2) {};

\node (b6) at (4,2) {};
\node (b5) at (5,1) {};
\node (b4) at (6,0) {};

\draw[<-] (a3.center) .. controls (a3.4 north east) and (a4.4 south west) .. (a4.center);
\draw (b4.center) .. controls (b4.4 north west) and (b5.4 south east) .. (b5);
\draw (b5) .. controls (b5.4 north west) and (b6.4 south east) .. (b6.center);

\draw (b6.center) .. controls (b6.4 north west) and (a2.4 north east) .. (a2.center);
\draw (a3.center) .. controls (a3.4 south west) and (b1.4 south east) .. (b1.center);


\node (aa1) at (8,-3) {};
\node (aa2) at (10,-1) {};

\node (bb3) at (8,-1) {};
\node (bb2) at (9,-2) {};
\node (bb1) at (10,-3) {};

\draw[->] (bb1.center) .. controls (bb1.4 north west) and (bb3.4 south east) .. (bb3.center);
\draw (aa1.center) .. controls (aa1.4 north east) and (bb2.4 south west) .. (bb2);
\draw[->] (bb2) .. controls (bb2.4 north east) and (aa2.4 south west) .. (aa2.center);

\draw (a4.center) .. controls (a4.4 north east) and (bb3.4 north west) .. (bb3.center);
\draw (aa1.center) .. controls (aa1.4 south west) and (b4.4 south east) .. (b4.center);

\end{tikzpicture}
\]
If we smooth the leftmost crossing,
after applying some Reidemeister moves, we get the following Tait diagram
\[
\begin{tikzpicture}[x=.35cm, y=.35cm,
    every edge/.style={
        draw,
      postaction={decorate,
                    decoration={markings}
                   }
        }
] 

\node (bbb) at (0,-.5) {$\scalebox{.5}{$+$}$}; 
\node (bbb) at (1,-.5) {$\scalebox{.5}{$-$}$}; 
\node (bbb) at (2,-.5) {$\scalebox{.5}{$+$}$}; 
\node (bbb) at (3,-.5) {$\scalebox{.5}{$-$}$}; 

\fill (0,0)  circle[radius=1.5pt];
\fill (1,0)  circle[radius=1.5pt];
\fill (2,0)  circle[radius=1.5pt];
\fill (3,0)  circle[radius=1.5pt];

\draw[thick] (0,0) to[out=90,in=90] (1,0);
\draw[thick] (1,0) to[out=90,in=90] (2,0);
\draw[thick] (0,0) to[out=-90,in=-90] (1,0);
\draw[thick] (1,0) to[out=-90,in=-90] (2,0);
\draw[thick] (2,0) to[out=-90,in=-90] (3,0);
 
\draw[thick] (0,0) -- (-.5,.5);
\draw[thick] (0,0) -- (-.5,-.5);

\draw[thick] (3,0) -- (2.5,.5);

\draw[thick] (3,0) -- (3.5,.5);
\draw[thick] (3,0) -- (3.5,-.5);

\end{tikzpicture}
\]
Similarly when the color is $-$, we have the following knot and Tait diagrams
\[
\begin{tikzpicture}[x=.35cm, y=.35cm,
    every edge/.style={
        draw,
      postaction={decorate,
                    decoration={markings}
                   }
        }
] 

\node (bbb) at (0,-.5) {$\scalebox{.5}{$-$}$}; 
\node (bbb) at (1,-.5) {$\scalebox{.5}{$+$}$}; 
\node (bbb) at (2,-.5) {$\scalebox{.5}{$-$}$}; 
\node (bbb) at (3,-.5) {$\scalebox{.5}{$+$}$}; 

\fill (0,0)  circle[radius=1.5pt];
\fill (1,0)  circle[radius=1.5pt];
\fill (2,0)  circle[radius=1.5pt];
\fill (3,0)  circle[radius=1.5pt];

\draw[thick] (0,0) to[out=90,in=90] (1,0);
\draw[thick] (1,0) to[out=90,in=90] (2,0);
\draw[thick] (2,0) to[out=-90,in=-90] (3,0);
 
\draw[thick] (0,0) -- (-.5,.5);
\draw[thick] (0,0) -- (-.5,-.5);

\draw[thick] (3,0) -- (2.5,.5);

\draw[thick] (3,0) -- (3.5,.5);
\draw[thick] (3,0) -- (3.5,-.5);

\end{tikzpicture}
\qquad 
\begin{tikzpicture}[x=.3cm, y=.3cm, every path/.style={
 thick}, 
every node/.style={transform shape, knot crossing, inner sep=1.5pt}]


\node (a1) at (0,0) {};
\node (a2) at (2,2) {};

\node (b3) at (0,2) {};
\node (b2) at (1,1) {};
\node (b1) at (2,0) {};

\draw[<-] (a1.center) .. controls (a1.4 north east) and (a2.4 south west) .. (a2.center);
\draw[<-] (b1.center) .. controls (b1.4 north west) and (b2.4 south east) .. (b2);
\draw (b2) .. controls (b2.4 north west) and (b3.4 south east) .. (b3.center);


\node (a3) at (4,0) {};
\node (a4) at (6,2) {};

\node (b6) at (4,2) {};
\node (b5) at (5,1) {};
\node (b4) at (6,0) {};

\draw[->] (a3.center) .. controls (a3.4 north east) and (a4.4 south west) .. (a4.center);
\draw (b4.center) .. controls (b4.4 north west) and (b5.4 south east) .. (b5);
\draw (b5) .. controls (b5.4 north west) and (b6.4 south east) .. (b6.center);

\draw (b6.center) .. controls (b6.4 north west) and (a2.4 north east) .. (a2.center);
\draw (a3.center) .. controls (a3.4 south west) and (b1.4 south east) .. (b1.center);


\node (aa1) at (8,-3) {};
\node (aa2) at (10,-1) {};

\node (bb3) at (8,-1) {};
\node (bb2) at (9,-2) {};
\node (bb1) at (10,-3) {};

\draw[<-] (bb1.center) .. controls (bb1.4 north west) and (bb3.4 south east) .. (bb3.center);
\draw[<-]  (aa1.center) .. controls (aa1.4 north east) and (bb2.4 south west) .. (bb2);
\draw (bb2) .. controls (bb2.4 north east) and (aa2.4 south west) .. (aa2.center);

\draw (a4.center) .. controls (a4.4 north east) and (bb3.4 north west) .. (bb3.center);
\draw (aa1.center) .. controls (aa1.4 south west) and (b4.4 south east) .. (b4.center);

\end{tikzpicture}
\]
and after smoothing the leftmost crossing, we get the following Tait diagram
\[
\begin{tikzpicture}[x=.35cm, y=.35cm,
    every edge/.style={
        draw,
      postaction={decorate,
                    decoration={markings}
                   }
        }
] 

\node (bbb) at (0,-.5) {$\scalebox{.5}{$-$}$}; 
\node (bbb) at (1,-.5) {$\scalebox{.5}{$+$}$}; 
\node (bbb) at (2,-.5) {$\scalebox{.5}{$-$}$}; 
\node (bbb) at (3,-.5) {$\scalebox{.5}{$+$}$}; 

\fill (0,0)  circle[radius=1.5pt];
\fill (1,0)  circle[radius=1.5pt];
\fill (2,0)  circle[radius=1.5pt];
\fill (3,0)  circle[radius=1.5pt];

\draw[thick] (0,0) to[out=90,in=90] (1,0);
\draw[thick] (1,0) to[out=90,in=90] (2,0);
\draw[thick] (0,0) to[out=-90,in=-90] (1,0);
\draw[thick] (1,0) to[out=-90,in=-90] (2,0);
\draw[thick] (2,0) to[out=-90,in=-90] (3,0);
 
\draw[thick] (0,0) -- (-.5,.5);
\draw[thick] (0,0) -- (-.5,-.5);

\draw[thick] (3,0) -- (2.5,.5);

\draw[thick] (3,0) -- (3.5,.5);
\draw[thick] (3,0) -- (3.5,-.5);

\end{tikzpicture}
\]
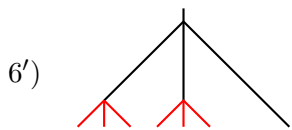
\begin{figure}

\begin{tikzpicture}[x=.35cm, y=.35cm,
    every edge/.style={
        draw,
      postaction={decorate,
                    decoration={markings}
                   }
        }
]
\node (bbb) at (-2,2) {$\scalebox{1}{$6')$}$}; 

\draw[thick] (1,1)--(4,4)--(8,0);
\draw[thick, red] (1,1)--(0,0);
\draw[thick] (4,4.5)--(4,4);
\draw[thick, red] (2,0)--(1,1);

\draw[thick, red] (1,0)--(1,1);
\draw[thick, red] (4,0)--(4,1);
\draw[thick] (4,1)--(4,4);
\draw[thick, red] (3,0)--(4,1)--(5,0);

%

\end{tikzpicture}
\caption{An alternative move of type 6.
The red part is the one to be removed.
}\label{additional6move}
\end{figure}
In both cases we get a new element $(T_+',T_-')$, where $T_+'$ has $n-4$ leaves.
\end{proof}

\section*{Acknowledgements}
We would like to thank the referees for their particularly attentive perusal of the manuscript, which resulted in many improvements in the presentation of the results of this paper.
The authors acknowledge the support by the Swiss National Science foundation through the SNF project no. 178756 (Fibred links, L-space covers and algorithmic knot theory).
 V.A. acknowledges the support  of the  Mathematisches Institut of the University of  Bern.


\end{document}